\newtheoremstyle{customdef}
  {\topsep}   
  {\topsep}   
  {\normalfont}  
  {18pt}       
  {\sc} 
  {}          
  {5pt plus 1pt minus 1pt} 
  {\thmname{#1}\thmnumber{ #2}.\thmnote{ (#3)}} 
\theoremstyle{customdef}
\newtheorem*{prob}{Problem}
\newtheoremstyle{customplain}
  {\topsep}   
  {\topsep}   
  {\itshape}  
  {18pt}       
  {\sc} 
  {}          
  {5pt plus 1pt minus 1pt} 
  {\thmname{#1}\thmnumber{ #2}.\thmnote{ (#3)}} 
\theoremstyle{customplain}
\newtheorem{theorem}{Theorem}
\newtheorem{lemma}{Lemma}
\newtheorem{prop}{Proposition}
\begin{document}

\vspace*{0.1cm}

\begin{center}
{\Large \bf \uppercase{On }$\bm{\Lambda^{r}}$\uppercase{-strong convergence of numerical sequences and Fourier series}}
\end{center}

\begin{center}
\uppercase{P.~K\'orus}\\
{\small Department of Mathematics, Juh\'asz Gyula Faculty of Education}\\
{\small  University of Szeged, Hattyas sor 10, H-6725 Szeged, Hungary}\\
{\small e-mail: korpet@jgypk.u-szeged.hu}\\
\end{center}
\bigskip

\begin{abstract} {\small We prove theorems of interest about the recently given $\Lambda^{r}$-strong convergence. We extend the results of F. M\'oricz [On $\Lambda$-strong convergence of numerical sequences and Fourier series, Acta Math.~Hungar., 54 (1989), 319--327].}
\end{abstract}

\let\thefootnote\relax\footnotetext{{\it Key words and phrases:} $\Lambda$-strong convergence, $\Lambda^{r}$-strong convergence, numerical sequence, Fourier series, Banach space.}
\let\thefootnote\relax\footnotetext{{\it Mathematics Subject Classification:} 40A05, 42A20.}

\section{Introduction}\label{sec1}

Throughout this paper let $\Lambda = \{ \lambda_k: k=0,1,\ldots\}$ be a non-decreasing sequence of positive numbers tending to $\infty$. The concept of $\Lambda$-strong convergence was introduced in \cite{Mor}. We say, that a sequence $S=\{s_k: k=0,1,\ldots\}$ of complex numbers converges $\Lambda$-strongly to a complex number $s$ if 
$$\lim_{n\to \infty} \frac{1}{\lambda_n} \sum_{k=0}^{n} |\lambda_k (s_k - s) - \lambda_{k-1} (s_{k-1} - s) | = 0$$
with the agreement $\lambda_{-1}=s_{-1}=0$. 

It is useful to note that $\Lambda$-strong convergence is an intermediate notion between bounded variation and ordinary convergence.

The following generalization was suggested recently in \cite{Kor}. Throughout this paper, we assume that $r\geq 2$ is an integer. A sequence $S=\{s_k\}$ of complex numbers is said to converge $\Lambda^r$-strongly to a complex number $s$ if 
$$\lim_{n\to \infty} \frac{1}{\lambda_n} \sum_{k=0}^{n} |\lambda_k (s_k - s) - \lambda_{k-r} (s_{k-r} - s)| = 0$$
with the agreement $\lambda_{-1}=\ldots =\lambda_{-r}=s_{-1}=\ldots =s_{-r}=0.$

It was seen that these $\Lambda^r$-convergence notions are intermediate notions between $\Lambda$-strong convergence and ordinary convergence. Two basic results were proved in \cite{Kor}.

\begin{lemma}\label{lemmak1}
A sequence $S$ converges $\Lambda^r$-strongly to a number $s$ if and only if
\begin{itemize}[noitemsep,topsep=0pt]
\item[{\rm (i)}] $S$ converges to $s$ in the ordinary sense, and
\item[{\rm (ii)}] $\displaystyle \lim_{n\to \infty} \frac{1}{\lambda_n} \sum_{k=r}^{n} \lambda_{k-r} |s_k - s_{k-r}| = 0.$
\end{itemize}
\end{lemma}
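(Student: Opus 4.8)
The plan is to reduce the whole equivalence to one algebraic identity for the summand plus a Toeplitz-type averaging argument. Writing $t_k=s_k-s$ and $u_k=\lambda_k t_k$ (so that $u_{-1}=\dots=u_{-r}=0$, since $\lambda_{-1}=\dots=\lambda_{-r}=0$), the quantity controlling $\Lambda^r$-strong convergence is
$$A_n=\frac{1}{\lambda_n}\sum_{k=0}^{n}|u_k-u_{k-r}|,$$
and I would start from the decomposition
$$u_k-u_{k-r}=(\lambda_k-\lambda_{k-r})\,t_k+\lambda_{k-r}\,(s_k-s_{k-r}),$$
which holds because $t_k-t_{k-r}=s_k-s_{k-r}$. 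This single identity is what ties together the defining sum, ordinary convergence $t_k\to0$, and the expression in (ii); note that $\lambda_k-\lambda_{k-r}\ge0$ since $\Lambda$ is non-decreasing.

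Next I would isolate the averaging tool: if $t_k\to0$, then $B_n:=\frac{1}{\lambda_n}\sum_{k=0}^{n}(\lambda_k-\lambda_{k-r})|t_k|\to0$. The weights $w_k^{(n)}=(\lambda_k-\lambda_{k-r})/\lambda_n$ are nonnegative, tend to $0$ for each fixed $k$ as $n\to\infty$ (because $\lambda_n\to\infty$), and have bounded total mass: telescoping gives $\sum_{k=0}^{n}(\lambda_k-\lambda_{k-r})=\sum_{k=n-r+1}^{n}\lambda_k\le r\lambda_n$, so $\sum_k w_k^{(n)}\le r$. The standard regular-summability argument—splitting the sum at a fixed index beyond which $|t_k|<\varepsilon$, letting the finitely many initial weights die out, and bounding the tail by $\varepsilon\cdot r$—then yields $B_n\to0$.

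For the \emph{only if} direction I would first establish (i). Telescoping $u$ along the residue class of $n$ modulo $r$: writing $n=\rho+mr$ with $0\le\rho<r$, we have $u_n=\sum_{j=0}^{m}(u_{\rho+jr}-u_{\rho+(j-1)r})$ because $u_{\rho-r}=0$, and each difference is one of the terms $u_k-u_{k-r}$ with $k=\rho+jr\le n$; hence $|u_n|\le\sum_{k=0}^{n}|u_k-u_{k-r}|=\lambda_n A_n$. Dividing by $\lambda_n$ gives $|t_n|\le A_n$, so $A_n\to0$ forces $s_n\to s$, which is (i). Applying the triangle inequality to the identity and summing then gives $\frac{1}{\lambda_n}\sum_{k=r}^{n}\lambda_{k-r}|s_k-s_{k-r}|\le A_n+B_n\to0$, which is (ii). Conversely, assuming (i) and (ii), the same triangle inequality yields $A_n\le B_n+\frac{1}{\lambda_n}\sum_{k=r}^{n}\lambda_{k-r}|s_k-s_{k-r}|$, where the first term tends to $0$ by the averaging lemma and the second is exactly the hypothesis (ii); hence $A_n\to0$.

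The step needing the most care is deducing ordinary convergence from $A_n\to0$: because of the step-$r$ structure the sequence $u$ does not telescope in a single chain but splits into $r$ interleaved subsequences, so the argument must select precisely the differences lying in one residue class and verify they are genuinely among the terms summed in $\lambda_n A_n$. This is what makes the case $r\ge2$ go through just as for $r=1$; once the identity, the telescoping bound $|t_n|\le A_n$, and the averaging lemma are in place, the rest is routine.
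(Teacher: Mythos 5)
Your proof is correct. Note that the paper itself does not prove Lemma~1 (it is quoted from the reference [K\'orus, Acta Math.\ Hungar.\ 148 (2016)]), but your argument --- the decomposition $\lambda_k(s_k-s)-\lambda_{k-r}(s_{k-r}-s)=(\lambda_k-\lambda_{k-r})(s_k-s)+\lambda_{k-r}(s_k-s_{k-r})$, the Toeplitz-type bound via $\sum_{k=0}^{n}(\lambda_k-\lambda_{k-r})=\sum_{k=n-r+1}^{n}\lambda_k\le r\lambda_n$, and the residue-class telescoping giving $|s_n-s|\le A_n$ --- is precisely the standard route, i.e.\ the direct generalization of M\'oricz's proof of the $r=1$ case, and every step checks out, including the careful handling of the indices $0\le k<r$ where $\lambda_{k-r}=0$.
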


\begin{lemma}\label{lemmak2}
A sequence $S$ converges $\Lambda^r$-strongly to a number $s$ if and only if
$$\sigma_n := \frac{1}{\lambda_n} \sum_{\substack{0 \leq k \leq n \\ r | n-k}} (\lambda_{k} - \lambda_{k-r}) s_k$$
converges to $s$ in the ordinary sense and condition {\rm (ii)} is satisfied.
\end{lemma}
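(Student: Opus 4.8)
The plan is to leverage Lemma \ref{lemmak1}, which already identifies $\Lambda^r$-strong convergence with the conjunction of ordinary convergence $s_n \to s$ and condition (ii). Since condition (ii) appears both in Lemma \ref{lemmak1} and in the statement to be proved, it suffices to show that, \emph{under the assumption that} (ii) \emph{holds}, the two conditions $s_n \to s$ and $\sigma_n \to s$ are equivalent. I would obtain this from the single clean estimate that $\sigma_n - s_n \to 0$ whenever (ii) is satisfied; the desired equivalence, and hence the lemma, then follows at once by combining with Lemma \ref{lemmak1}.

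First I would record the structure of the weights. The increments $\lambda_k - \lambda_{k-r}$ are nonnegative since $\Lambda$ is non-decreasing, and restricted to a fixed residue class $k \equiv n \pmod r$ with $0 \le k \le n$ they telescope: running through $k = n, n-r, n-2r, \ldots$ and invoking the agreement $\lambda_{-1} = \cdots = \lambda_{-r} = 0$, their sum collapses to $\lambda_n$. Thus $\sigma_n$ is genuinely a weighted average of the terms $s_k$ along this arithmetic progression, and in particular
$$\sigma_n - s = \frac{1}{\lambda_n} \sum_{\substack{0 \le k \le n \\ r \mid n-k}} (\lambda_k - \lambda_{k-r})(s_k - s).$$

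The heart of the argument is the exact computation of $\sigma_n - s_n$. Writing each difference $s_k - s_n$ (for $k$ in the progression) as a telescoping sum of consecutive increments $s_j - s_{j-r}$ and interchanging the order of summation, the $\lambda$-weights telescope a second time—again exploiting the convention for negative indices—and I expect to arrive at the identity
$$\sigma_n - s_n = -\frac{1}{\lambda_n} \sum_{\substack{r \le k \le n \\ r \mid n-k}} \lambda_{k-r}(s_k - s_{k-r}).$$
This Abel-type rearrangement, with careful bookkeeping of the residue class and the boundary conventions, is the step I expect to be the main obstacle; the rest is routine.

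Once this identity is in hand the conclusion is immediate: taking absolute values and enlarging the sum from the single residue class $k \equiv n \pmod r$ to the full range $r \le k \le n$ gives
$$|\sigma_n - s_n| \le \frac{1}{\lambda_n} \sum_{k=r}^{n} \lambda_{k-r}\,|s_k - s_{k-r}|,$$
whose right-hand side is precisely the expression appearing in condition (ii) and therefore tends to $0$. Hence $\sigma_n - s_n \to 0$ under (ii), so $s_n \to s$ if and only if $\sigma_n \to s$. Together with the characterization in Lemma \ref{lemmak1}, this establishes the stated equivalence.
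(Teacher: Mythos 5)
Your proposal is correct: the telescoping of the weights $\lambda_k-\lambda_{k-r}$ along the residue class, the Abel-summation identity $\sigma_n-s_n=-\lambda_n^{-1}\sum_{r\le k\le n,\,r\mid n-k}\lambda_{k-r}(s_k-s_{k-r})$ (the boundary conventions $\lambda_{-1}=\cdots=\lambda_{-r}=0$ do make both telescopings close up properly), and the enlargement to the full range $r\le k\le n$ all check out, so under (ii) one gets $\sigma_n-s_n\to 0$ and the lemma follows from Lemma \ref{lemmak1}. The paper itself only quotes this lemma from the reference \cite{Kor} rather than proving it, but your argument is exactly the standard one used there, so there is nothing to add.
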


\section{Results on numerical sequences}\label{sec2}

Denote by $c^r(\Lambda)$ the class of $\Lambda^r$-strong convergent sequences $S=\{s_k\}$ of complex numbers. Obviously, $c^r(\Lambda)$ is a linear space. Let
$$\|S\|_{c^r(\Lambda)}:=\sup_{n \geq 0} \frac{1}{\lambda_n} \sum_{k=0}^{n} |\lambda_k s_k - \lambda_{k-r} s_{k-r}|,$$
and consider the well-known norms
$$\|S\|_{\infty}:=\sup_{k \geq 0} |s_k|, \quad \|S\|_{\rm bv}:=\sum_{k=0}^{\infty} | s_k - s_{k-1}|.$$
It is easy to see that $\|.\|_{c^r(\Lambda)}$ is also a norm on $c^r(\Lambda)$. 

Moreover, one can easily obtain the inequality
$$\sum_{k=0}^{n} |\lambda_k s_k - \lambda_{k-r} s_{k-r}| \leq r \sum_{k=0}^{n} |\lambda_k s_k - \lambda_{k-1} s_{k-1}|$$
and the equality
$$s_k = \frac{1}{\lambda_n} \sum_{\substack{0 \leq k \leq n \\ r | n-k}} (\lambda_k s_k - \lambda_{k-r} s_{k-r}).$$
These together imply the following result.
\begin{prop}\label{prop1}
For every sequence $S=\{s_k\}$ of complex numbers we have
$$\|S\|_{\infty} \leq \|S\|_{c^r(\Lambda)} \leq r \|S\|_{c(\Lambda)} \leq 2r \|S\|_{\rm bv}.$$ As a consequence, ${\rm bv} \subset c(\Lambda) \subset c^r(\Lambda) \subset c$.
\end{prop}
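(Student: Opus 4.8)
The plan is to prove the three displayed inequalities separately, reading the first off from the stated telescoping identity, the second off from the stated inequality, and the third from an Abel (summation-by-parts) decomposition; the four-term chain of inclusions is then obtained by applying the same estimates to the \emph{centered} sequence $\{s_k-s\}$, together with Lemma \ref{lemmak1}.

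For $\|S\|_{\infty}\le\|S\|_{c^r(\Lambda)}$ I would fix $n$, apply the triangle inequality to the stated identity $s_n=\lambda_n^{-1}\sum_{0\le k\le n,\ r\mid n-k}(\lambda_k s_k-\lambda_{k-r}s_{k-r})$, enlarge the restricted sum to the full sum $\sum_{k=0}^n|\lambda_k s_k-\lambda_{k-r}s_{k-r}|$, bound it by $\lambda_n\|S\|_{c^r(\Lambda)}$, and take the supremum over $n$. The bound $\|S\|_{c^r(\Lambda)}\le r\|S\|_{c(\Lambda)}$ is immediate on dividing the stated inequality by $\lambda_n$ and taking suprema. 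For $r\|S\|_{c(\Lambda)}\le 2r\|S\|_{\rm bv}$ I would write $\lambda_k s_k-\lambda_{k-1}s_{k-1}=\lambda_k(s_k-s_{k-1})+(\lambda_k-\lambda_{k-1})s_{k-1}$ and use $\lambda_k\le\lambda_n$, the telescoping $\sum_{k=0}^n(\lambda_k-\lambda_{k-1})=\lambda_n$, and the elementary $\|S\|_{\infty}\le\|S\|_{\rm bv}$ (from $s_k=\sum_{j=0}^k(s_j-s_{j-1})$) to get $\lambda_n^{-1}\sum_{k=0}^n|\lambda_k s_k-\lambda_{k-1}s_{k-1}|\le\|S\|_{\rm bv}+\|S\|_{\infty}\le 2\|S\|_{\rm bv}$, then take the supremum.

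For the inclusions I would work from right to left along the chain ${\rm bv}\subset c(\Lambda)\subset c^r(\Lambda)\subset c$. The inclusion $c^r(\Lambda)\subset c$ is precisely clause (i) of Lemma \ref{lemmak1}. For $c(\Lambda)\subset c^r(\Lambda)$ I would apply the stated inequality not to $S$ but to $\{s_k-s\}$---legitimate because the convention $\lambda_{-1}=\cdots=\lambda_{-r}=0$ annihilates every boundary term---so that the $\Lambda$-strong defining averages, which tend to $0$, dominate $r^{-1}$ times the $\Lambda^r$-strong ones and force the latter to $0$ as well. For ${\rm bv}\subset c(\Lambda)$, given $S\in{\rm bv}$ with limit $s$, I would set $t_k:=s_k-s$ and apply the same Abel decomposition to $\lambda_k t_k-\lambda_{k-1}t_{k-1}$, then estimate each piece by an $\varepsilon$--$N$ argument: a fixed tail of $\sum|t_k-t_{k-1}|$ makes the high-index part small while $\lambda_n\to\infty$ swallows the finite head of $\lambda_k(t_k-t_{k-1})$, and $t_{k-1}\to 0$ together with $\lambda_n^{-1}\sum_{k>N}(\lambda_k-\lambda_{k-1})\le 1$ handles the $(\lambda_k-\lambda_{k-1})t_{k-1}$ part.

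The main obstacle is conceptual rather than computational: the three displayed inequalities only control \emph{suprema}, so they certify membership in the appropriate normed spaces but do not by themselves yield convergence to a prescribed limit, and hence the inclusions cannot be read off mechanically. The genuinely substantive step is the last inclusion ${\rm bv}\subset c(\Lambda)$, where one must first produce the limit $s$ and then show the defining averages vanish. This is also the only place where the regularity $\lambda_n\to\infty$ (rather than mere monotonicity of $\Lambda$) is used, and where the order of quantifiers in the $\varepsilon$--$N$ splitting must be arranged with care.
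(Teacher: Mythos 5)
Your proposal is correct and follows essentially the same route as the paper, whose (largely unwritten) proof rests on exactly the two facts you use: the telescoping identity for $s_n$ and the inequality comparing the $r$-step and $1$-step sums, supplemented by the Abel decomposition $\lambda_k s_k-\lambda_{k-1}s_{k-1}=\lambda_k(s_k-s_{k-1})+(\lambda_k-\lambda_{k-1})s_{k-1}$ for the bv bound and by Lemma \ref{lemmak1} for the inclusions. Your extra care in separating the norm inequalities from the convergence statements (by centering at $s$) supplies precisely the detail the paper leaves implicit.
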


It was seen in \cite{Mor} that $c(\Lambda)$ endowed with the norm $\|.\|_{c(\Lambda)}$ is a Banach space. A similar results holds for $c^r(\Lambda)$.

\begin{theorem}\label{thm1}
The class $c^r(\Lambda)$ endowed with the norm $\|.\|_{c^r(\Lambda)}$ is a Banach space.
\end{theorem}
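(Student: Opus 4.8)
The plan is to show completeness directly from the definition: take a Cauchy sequence in $c^r(\Lambda)$, identify a natural candidate limit, and verify both that the limit lies in $c^r(\Lambda)$ and that convergence holds in the norm $\|.\|_{c^r(\Lambda)}$. First I would denote the Cauchy sequence by $\{S^{(m)}\}_{m\geq 0}$ with $S^{(m)}=\{s_k^{(m)}: k=0,1,\dots\}$. The first inequality in Proposition~\ref{prop1}, namely $\|S\|_\infty \leq \|S\|_{c^r(\Lambda)}$, immediately tells us that $\{S^{(m)}\}$ is also Cauchy in the sup-norm, hence $s_k^{(m)} \to s_k$ as $m\to\infty$ for each fixed $k$, uniformly in $k$. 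Setting $S=\{s_k\}$ gives the candidate limit, and the uniform convergence already shows $S\in\ell^\infty$.

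The heart of the argument is to verify that $S\in c^r(\Lambda)$ and that $\|S^{(m)}-S\|_{c^r(\Lambda)}\to 0$. The standard technique here is to work with the partial sums defining the norm. Fix $\varepsilon>0$; by the Cauchy property there is an $N$ such that for all $m,m'\geq N$ and all $n\geq 0$,
$$\frac{1}{\lambda_n}\sum_{k=0}^{n}\bigl|\lambda_k(s_k^{(m)}-s_k^{(m')})-\lambda_{k-r}(s_{k-r}^{(m)}-s_{k-r}^{(m')})\bigr|<\varepsilon.$$
Since each such sum is finite (only $n+1$ terms) and $s_k^{(m')}\to s_k$ termwise as $m'\to\infty$, I would let $m'\to\infty$ inside the finite sum to obtain, for all $m\geq N$ and all $n\geq 0$,
$$\frac{1}{\lambda_n}\sum_{k=0}^{n}\bigl|\lambda_k(s_k^{(m)}-s_k)-\lambda_{k-r}(s_{k-r}^{(m)}-s_{k-r})\bigr|\leq \varepsilon.$$
Taking the supremum over $n$ yields $\|S^{(m)}-S\|_{c^r(\Lambda)}\leq\varepsilon$ for all $m\geq N$, which is exactly norm convergence, provided we first know $S\in c^r(\Lambda)$.

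To place $S$ in $c^r(\Lambda)$, I would combine the above with membership of a fixed $S^{(N)}$: by the triangle inequality $\|S\|_{c^r(\Lambda)}\leq\|S^{(N)}\|_{c^r(\Lambda)}+\varepsilon<\infty$, so $S$ has finite norm, but finiteness of the norm alone does not establish the limiting relation in the definition of $\Lambda^r$-strong convergence. This is the step I expect to be the main obstacle: I must verify that $S$ actually \emph{converges} $\Lambda^r$-strongly to some scalar $s$, not merely that its norm is bounded. I would invoke Lemma~\ref{lemmak1}: since convergence in $\|.\|_{c^r(\Lambda)}$ forces uniform (hence pointwise) convergence $s_k^{(m)}\to s_k$, and each $S^{(m)}$ converges in the ordinary sense to some limit $s^{(m)}$, the uniform estimate $\|S^{(m)}-S^{(m')}\|_\infty<\varepsilon$ shows $\{s^{(m)}\}$ is Cauchy in $\mathbb{C}$ with limit $s$, and a standard $3\varepsilon$-argument gives $s_k\to s$. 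It then remains to check condition (ii) of Lemma~\ref{lemmak1} for $S$ and $s$; this follows by splitting $|s_k-s_{k-r}|$ through the approximant $S^{(N)}$ and controlling the tail using the norm estimate just established, after which Lemma~\ref{lemmak1} certifies $S\in c^r(\Lambda)$ and completes the proof.
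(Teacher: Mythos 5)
Your proposal is correct and follows essentially the same route as the paper, which itself defers to M\'oricz's completeness argument: extract a candidate limit $S$ via $\|S\|_\infty \le \|S\|_{c^r(\Lambda)}$, pass to the limit termwise in the finite sums defining the norm to get $\|S^{(m)}-S\|_{c^r(\Lambda)}\le\varepsilon$, then verify that $S$ actually belongs to $c^r(\Lambda)$. The one place your sketch needs care is the last step, the verification of condition (ii) of Lemma~\ref{lemmak1} for $S$: if you split $|s_k-s_{k-r}|$ through the approximant $S^{(N)}$ and try to control the error terms by $\|S-S^{(N)}\|_\infty$ alone, the resulting bound carries the factor $\frac{1}{\lambda_n}\sum_{k=r}^{n}\lambda_{k-r}$, which is unbounded in $n$ (take $\lambda_k=k+1$), so the sup-norm estimate is not enough. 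You must instead use the full estimate $\|S-S^{(N)}\|_{c^r(\Lambda)}\le\varepsilon$ together with
$$\frac{1}{\lambda_n}\sum_{k=r}^{n}\lambda_{k-r}|t_k-t_{k-r}|\le \frac{1}{\lambda_n}\sum_{k=r}^{n}|\lambda_k t_k-\lambda_{k-r}t_{k-r}|+\frac{1}{\lambda_n}\sum_{k=r}^{n}(\lambda_k-\lambda_{k-r})|t_k|\le (1+r)\|T\|_{c^r(\Lambda)}$$
applied to $T=S-S^{(N)}$, after which the remaining term $\frac{1}{\lambda_n}\sum\lambda_{k-r}|s^{(N)}_k-s^{(N)}_{k-r}|$ tends to zero because $S^{(N)}\in c^r(\Lambda)$. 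Alternatively --- and this is what M\'oricz's argument, invoked by the paper, actually does --- one can bypass Lemma~\ref{lemmak1} and verify the definition of $\Lambda^r$-strong convergence of $S$ to $s$ directly by a three-term triangle inequality through $S^{(j)}$ and $s^{(j)}$, where the third term is simply $|s^{(j)}-s|\cdot\frac{1}{\lambda_n}\sum_{k=0}^{n}(\lambda_k-\lambda_{k-r})\le r\,|s^{(j)}-s|$ and is harmless. Either way the proof closes; just be aware that the tail control in your version genuinely requires the $c^r(\Lambda)$-norm, not the sup-norm.
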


\begin{proof}[\indent \sc Proof] With an analogous argument to the proof of \cite[Theorem 1]{Mor}, we can get the required completeness of $c^r(\Lambda)$. The only needed modifications are
\begin{gather*}
\frac{1}{\lambda_n} \sum_{k=0}^{n} |\lambda_k (s_{\ell k} - s_k) - \lambda_{k-r} (s_{\ell, k-r} - s_{k-r})|
 \leq \|S_{\ell} - S\|_{\infty} \frac{1}{\lambda_n} \sum_{k=0}^{n} (\lambda_{k} + \lambda_{k-r}) \leq \varepsilon
\end{gather*}
and
\begin{gather*}
\frac{1}{\lambda_n} \sum_{k=0}^{n} |\lambda_k (s_{j k} - s_k) - \lambda_{k-r} (s_{j, k-r} - s_{k-r})|\\
 \leq \frac{1}{\lambda_n} \sum_{k=0}^{n} |\lambda_k (s_{j k} - s_{\ell k}) - \lambda_{k-r} (s_{j, k-r} - s_{\ell, k-r})|\\
 + \frac{1}{\lambda_n} \sum_{k=0}^{n} |\lambda_k (s_{\ell k} - s_k) - \lambda_{k-r} (s_{\ell, k-r} - s_{k-r})| \leq
 \|S_j - S_{\ell}\|_{c^r(\Lambda)} + \varepsilon \leq 2 \varepsilon
\end{gather*}
for large enough $\ell$ and $j$. 
\end{proof}

Now that we saw that $c^r(\Lambda)$ is a Banach space, we show that it has a Schhauder basis. In fact, putting
$$F^{(j)}:=(0,0,\ldots,0,\overset{\undergroup{\scriptstyle j}}{1},0,0,\ldots,0,\overset{\undergroup{\scriptstyle j+r}}{1},0,0,\ldots,0,\overset{\undergroup{\scriptstyle j+2r}}{1},\ldots) \quad (j=0,1,\ldots),$$
clearly each $F^{(j)}\in c^r(\Lambda)$.

\begin{theorem}\label{thm2}
$\{F^{(j)}: j=0,1,\ldots\}$ is a basis in $c^r(\Lambda)$.
\end{theorem}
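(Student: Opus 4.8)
The plan is to produce, for each $S=\{s_k\}\in c^r(\Lambda)$ (which has an ordinary limit $s$ by Lemma~\ref{lemmak1}), an explicit expansion $S=\sum_{j\ge0}a_jF^{(j)}$ that converges in the norm $\|\cdot\|_{c^r(\Lambda)}$, and then to show it is the only such expansion; granting the already-noted membership $F^{(j)}\in c^r(\Lambda)$, this is exactly what a Schauder basis requires. First I would pin down the coefficients coordinatewise. Since $F^{(j)}$ carries a $1$ in position $k$ precisely when $j\le k$ and $j\equiv k\pmod r$, the $k$-th coordinate of $\sum_{j=0}^{N}a_jF^{(j)}$ equals $\sum_{j\le k,\,j\equiv k\,(r)}a_j$ as soon as $N\ge k$. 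Forcing this to equal $s_k$ and telescoping within each residue class (using $s_{-1}=\dots=s_{-r}=0$) gives $a_j=s_j-s_{j-r}$, so the only candidate is
$$S=\sum_{j\ge0}(s_j-s_{j-r})\,F^{(j)}.$$
With this choice the partial sum $P_N:=\sum_{j=0}^{N}(s_j-s_{j-r})F^{(j)}$ reproduces $s_k$ for $k\le N$, while for $k>N$ its $k$-th coordinate is $s_{N'(k)}$, where $N'(k)$ denotes the largest index $\le N$ congruent to $k$ modulo $r$.

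The heart of the proof is the estimate $\|S-P_N\|_{c^r(\Lambda)}\to0$. Setting $R_N:=S-P_N$, one has $(R_N)_k=0$ for $k\le N$ and $(R_N)_k=s_k-s_{N'(k)}$ for $k>N$, so only the indices $n>N$ contribute to the norm. I would split the defining sum $\tfrac1{\lambda_n}\sum_{k=0}^{n}|\lambda_k(R_N)_k-\lambda_{k-r}(R_N)_{k-r}|$ at $k=N+r$. For the $r$ boundary indices $N<k\le N+r$ one has $N'(k)=k-r$, hence $(R_N)_{k-r}=0$ and $(R_N)_k=s_k-s_{k-r}$, so their contribution is at most $\sum_{k=N+1}^{N+r}|s_k-s_{k-r}|$, which tends to $0$ because $s_k-s_{k-r}\to0$ by ordinary convergence. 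For the bulk indices $k>N+r$ I would subtract the limit, using
$$\lambda_k(R_N)_k-\lambda_{k-r}(R_N)_{k-r}=\lambda_k(s_k-s)-\lambda_{k-r}(s_{k-r}-s)-(s_{N'(k)}-s)(\lambda_k-\lambda_{k-r}).$$
Summed over $k$ and divided by $\lambda_n$, the first two terms are dominated by the full $\Lambda^r$-strong quantity $A_n:=\tfrac1{\lambda_n}\sum_{k=0}^{n}|\lambda_k(s_k-s)-\lambda_{k-r}(s_{k-r}-s)|$, whose supremum over $n>N$ tends to $0$; the last term is bounded by $\max_{\rho}|s_{N'(\rho)}-s|\cdot\tfrac1{\lambda_n}\sum_{k=0}^{n}(\lambda_k-\lambda_{k-r})\le r\max_{\rho}|s_{N'(\rho)}-s|$, which tends to $0$ since every $N'(\rho)\to\infty$. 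Combining the three bounds yields $\|R_N\|_{c^r(\Lambda)}\to0$.

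For uniqueness, I would use $\|\cdot\|_\infty\le\|\cdot\|_{c^r(\Lambda)}$ from Proposition~\ref{prop1}: any norm-convergent expansion $S=\sum_j b_jF^{(j)}$ converges coordinatewise, and evaluating the $k$-th coordinate and telescoping forces $b_j=s_j-s_{j-r}$, so the expansion is unique. I expect the main obstacle to be the uniform-in-$n$ control of $\|R_N\|_{c^r(\Lambda)}$: the supremum over all $n>N$ must be made small at once, and the clash between the weight $\lambda_k$ that appears at the $r$ boundary terms and the weight $\lambda_{k-r}$ native to condition (ii) of Lemma~\ref{lemmak1}, together with the telescoping inequality $\tfrac1{\lambda_n}\sum_{k=0}^{n}(\lambda_k-\lambda_{k-r})\le r$, is precisely where the estimates must be handled with care.
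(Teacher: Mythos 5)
Your proposal is correct and follows essentially the same route as the paper: the same candidate expansion with coefficients $s_j-s_{j-r}$, the same description of the remainder $R_N$, the same telescoping bound $\tfrac{1}{\lambda_n}\sum_{k=0}^{n}(\lambda_k-\lambda_{k-r})\le r$ for the cross term, and uniqueness via $\|\cdot\|_\infty\le\|\cdot\|_{c^r(\Lambda)}$. The only (harmless) difference is in the bulk estimate, where you recenter at the limit $s$ and invoke the defining quantity of $\Lambda^r$-strong convergence directly, whereas the paper splits off $\lambda_{k-r}|s_k-s_{k-r}|$ and appeals to condition (ii) of Lemma \ref{lemmak1}.
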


\begin{proof}[\indent \sc Proof] {\it Existence.} We will show that if $S=\{s_k\}$ is a $\Lambda^r$-strongly convergent sequence, then
\begin{gather}\label{exist}
\lim_{m\to\infty} \|S - \sum_{j=0}^{m} (s_{j} - s_{j-r}) F^{(j)}\|_{c^r(\Lambda)} = 0.
\end{gather}
Since
\begin{gather*}
S - \sum_{j=0}^{m} (s_{j} - s_{j-r}) F^{(j)} \\
= (0,0,\ldots,\overset{\undergroup{\scriptstyle m}}{0},\overset{\undergroup{\scriptstyle m+1}}{s_{m+1} - s_{m-r+1}},{s_{m+2} - s_{m-r+2}},\ldots,\overset{\undergroup{\scriptstyle m+ar+b}}{s_{m+ar+b} - s_{m-r+b}},\ldots),\nonumber
\end{gather*}
where $0 \leq a$, $1\leq b \leq r$, by definition,
{\allowdisplaybreaks
\begin{gather*}
\|S - \sum_{j=0}^{m} (s_{j} - s_{j-r}) F^{(j)}\|_{c^r(\Lambda)} \\ = \sup_{n \geq 1} \frac{1}{\lambda_{m+n}} \Big( \sum_{\substack{1 \leq b \leq r \\ b \leq n}} \lambda_{m+b}|(s_{m+b} - s_{m-r+b}|\\ + \sum_{a=1}^{[n/r]} \sum_{\substack{1 \leq b \leq r \\ ar+b \leq n}} |\lambda_{m+ar+b} (s_{m+ar+b} - s_{m-r+b}) - \lambda_{m+ar-r+b} (s_{m+ar-r+b} - s_{m-r+b})| \Big) \\
\leq \sup_{n \geq 1} \frac{1}{\lambda_{m+n}} \Big( \sum_{a=0}^{[n/r]} \sum_{\substack{1 \leq b \leq r \\ ar+b \leq n}} (\lambda_{m+ar+b} - \lambda_{m+ar-r+b}) |s_{m+ar+b} - s_{m-r+b}|\\
+ \sum_{a=0}^{[n/r]} \sum_{\substack{1 \leq b \leq r \\ ar+b \leq n}} \lambda_{m+ar-r+b} |s_{m+ar+b} - s_{m+ar-r+b}| \Big)\\
\leq r \sup_{j,k > m-r} |s_{j} - s_{k}| + \sup_{n \geq m+1} \frac{1}{\lambda_n} \sum_{k=m+1}^{n} \lambda_{k-r} |s_k - s_{k-r}|.
\end{gather*}
}
Applying Proposition \ref{prop1} and Lemma \ref{lemmak1}, respectively, results in \eqref{exist} to be proved.

{\it Uniqueness.} It can be proved in basically the same way as it was seen in the proof of \cite[Theorem 2]{Mor}.
\end{proof}

\section{Results on Fourier series: $\textit{C}$-metric}\label{sec3}

Denote by $C$ the Banach space of the $2\pi$ periodic complex-valued continuous functions endowed with the norm $\| f \|_C := \max_t |f(t)|$.
Let
\begin{gather}\label{fourC}
\frac{1}{2} a_0 + \sum_{k=1}^{\infty} (a_k (f) \cos{kt} + b_k (f) \sin{kt})
\end{gather}
be the Fourier series of a function $f\in C$ with the usual notation $s_k (f) = s_k (f,t)$ for the $k$th partial sum of the series \eqref{fourC}. Denote by $U$, $A$, and  $S(\Lambda)$, respectively, the classes of functions $f\in C$ whose Fourier series converges uniformly, converges absolutely and converges
uniformly $\Lambda$-strongly on $[0, 2\pi)$, endowed with the usual norms, see \cite{Mor}.

A function $f\in C$ belongs to $S(\Lambda^r)$ if
$$ \lim_{n \to \infty} \left\| \frac{1}{\lambda_n} \sum_{k=0}^{n} |\lambda_k (s_k(f)-f) - \lambda_{k-r} (s_{k-r}(f)-f)| \right\|_C = 0.$$

Set the norm
\begin{align*}
\|f\|_{S(\Lambda^r)}:=\sup_{n \geq 0} \left\| \frac{1}{\lambda_n} \sum_{k=0}^{n} |\lambda_k s_k(f) - \lambda_{k-r} s_{k-r}(f)| \right\|_C,
\end{align*}
which is finite for every for $S(\Lambda^r)$ since
$$\|f\|_{S(\Lambda^r)} \leq \|f\|_C + \sup_{n \geq 0} \left\| \frac{1}{\lambda_n} \sum_{k=0}^{n} |\lambda_k (s_k(f)-f) - \lambda_{k-r} (s_{k-r}(f)-f)| \right\|_C.$$

The norm inequalities corresponding to the ones in Proposition \ref{prop1} are formulated below.

\begin{prop}\label{propC1}
For every function $f\in C$ we have
$$\|f\|_{U} \leq \|f\|_{S(\Lambda^r)} \leq r \|f\|_{S(\Lambda)} \leq 2r \|f\|_{A}.$$ As a consequence, $A \subset S(\Lambda) \subset S(\Lambda^r) \subset U$.
\end{prop}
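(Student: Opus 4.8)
The plan is to reproduce the proof of Proposition \ref{prop1} pointwise in the variable $t$, and then pass to the $C$-metric by taking $\max_t$ and the supremum over $n$. The key observation is that for each fixed $t$ the partial sums $\{s_k(f,t)\}_{k\geq 0}$ form a sequence of complex numbers, so the two elementary relations established for numerical sequences just before Proposition \ref{prop1} — the reconstruction equality and the inequality $\sum_{k=0}^{n}|\lambda_k s_k-\lambda_{k-r}s_{k-r}|\leq r\sum_{k=0}^{n}|\lambda_k s_k-\lambda_{k-1}s_{k-1}|$ — are available verbatim with $s_k$ replaced by $s_k(f,t)$. Since the $C$-norm is monotone on nonnegative functions (if $0\leq g\leq h$ pointwise then $\|g\|_C\leq\|h\|_C$), every pointwise estimate survives the passage to $\|\cdot\|_C$.

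For the first inequality I would start from the reconstruction identity preceding Proposition \ref{prop1}, applied to $\{s_k(f,t)\}$: for every $t$ and every $n$,
$$|s_n(f,t)|\leq\frac{1}{\lambda_n}\sum_{\substack{0\leq k\leq n\\ r\mid n-k}}|\lambda_k s_k(f,t)-\lambda_{k-r}s_{k-r}(f,t)|\leq\frac{1}{\lambda_n}\sum_{k=0}^{n}|\lambda_k s_k(f,t)-\lambda_{k-r}s_{k-r}(f,t)|.$$
Taking $\max_t$ produces $\|s_n(f)\|_C$ on the left and a quantity bounded by $\|f\|_{S(\Lambda^r)}$ on the right; taking $\sup_n$ and recalling $\|f\|_U=\sup_n\|s_n(f)\|_C$ yields $\|f\|_U\leq\|f\|_{S(\Lambda^r)}$.

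For the middle inequality I would invoke the pointwise algebraic bound (valid for every $t$)
$$\frac{1}{\lambda_n}\sum_{k=0}^{n}|\lambda_k s_k(f,t)-\lambda_{k-r}s_{k-r}(f,t)|\leq\frac{r}{\lambda_n}\sum_{k=0}^{n}|\lambda_k s_k(f,t)-\lambda_{k-1}s_{k-1}(f,t)|,$$
which follows by writing $\lambda_k s_k-\lambda_{k-r}s_{k-r}$ as the telescoping sum of the $r$ consecutive first differences $\lambda_{k-i}s_{k-i}-\lambda_{k-i-1}s_{k-i-1}$ ($0\leq i\leq r-1$) and noting that each such first difference is counted at most $r$ times. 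Applying $\|\cdot\|_C$ and then $\sup_n$ gives $\|f\|_{S(\Lambda^r)}\leq r\|f\|_{S(\Lambda)}$. The final inequality $r\|f\|_{S(\Lambda)}\leq 2r\|f\|_A$ is precisely the $r=1$ case, i.e. the corresponding estimate of \cite{Mor}, which I would simply cite. Chaining the three inequalities gives the displayed bound; the inclusions $A\subset S(\Lambda)\subset S(\Lambda^r)\subset U$ then follow by running exactly the same pointwise estimates on the deviations $e_k(t):=s_k(f,t)-f(t)$ in place of $s_k(f,t)$, so that the defining limit condition of each coarser class is dominated (up to the harmless constant $r$, which does not affect convergence to $0$) by that of the finer one.

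The only genuine point requiring care — rather than the routine algebra — is the order of operations: the $C$-norm is applied to the \emph{entire} nonnegative sum $\frac{1}{\lambda_n}\sum_{k=0}^{n}|\lambda_k s_k(f,t)-\lambda_{k-r}s_{k-r}(f,t)|$ before the supremum over $n$ is taken, so each inequality must first be established pointwise in $t$ for the full sum (not term by term) and only afterwards subjected to $\max_t$ and $\sup_n$. Because all three pointwise inequalities hold uniformly in $t$ and the $C$-norm respects pointwise domination of nonnegative functions, no hypotheses on $f$ beyond $f\in C$ are needed.
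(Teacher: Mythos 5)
Your proof is correct and is precisely the argument the paper intends: the proposition is stated without proof as the $C$-metric counterpart of Proposition \ref{prop1}, and your strategy of applying the reconstruction identity and the telescoping inequality pointwise in $t$ (both for $s_k(f,t)$ and for the deviations $s_k(f,t)-f(t)$) and then passing through $\max_t$ and $\sup_n$ is exactly the omitted analogous argument. No gaps.
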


The following results are the counterparts to Lemmas \ref{lemmak1} and \ref{lemmak2} and Theorems \ref{thm1} and \ref{thm2}, respectively. We omit the details of the analogous proofs, except for Theorem \ref{thmC2}.

\begin{lemma}\label{lemmaC1}
A function $f$ belongs to $S(\Lambda^r)$ if and only if
\begin{itemize}[noitemsep,topsep=0pt]
\item[{\rm (iii)}] $\displaystyle \lim_{k\to \infty} \|s_k (f) - f \|_C = 0$, and
\item[{\rm (iv)}] $\displaystyle \lim_{n\to \infty} \left\|\frac{1}{\lambda_n} \sum_{k=r}^{n} \lambda_{k-r} |s_k (f) - s_{k-r} (f)| \right\|_C = 0.$
\end{itemize}
\end{lemma}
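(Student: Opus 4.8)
The plan is to imitate, in the $C$-metric, the proof of Lemma~\ref{lemmak1}. Write $d_k = d_k(t) := s_k(f,t) - f(t)$ and record the pointwise identity
$$\lambda_k d_k - \lambda_{k-r} d_{k-r} = \lambda_{k-r}\bigl(s_k(f) - s_{k-r}(f)\bigr) + (\lambda_k - \lambda_{k-r}) d_k,$$
which holds for every $k$ and every $t$ (for $k<r$ both sides reduce to $\lambda_k d_k$, since $\lambda_{k-r}=0$). Everything then follows by reading off the two inequalities obtained from this identity via the triangle inequality, first pointwise in $t$ and then, after forming the normalized sums and taking the supremum over $t$, in the $C$-norm through $\|P+Q\|_C \le \|P\|_C + \|Q\|_C$ applied to nonnegative $P,Q$.

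For the sufficiency of (iii) and (iv), I would bound, pointwise in $t$,
$$\frac{1}{\lambda_n}\sum_{k=0}^{n} \bigl|\lambda_k d_k - \lambda_{k-r} d_{k-r}\bigr| \le \frac{1}{\lambda_n}\sum_{k=r}^{n} \lambda_{k-r}\,|s_k(f) - s_{k-r}(f)| + \frac{1}{\lambda_n}\sum_{k=0}^{n} (\lambda_k - \lambda_{k-r})\,|d_k|,$$
take $\|\cdot\|_C$, and treat the two resulting terms separately. The first tends to $0$ by (iv). For the second I would use the block-telescoping estimate $\sum_{k=0}^{n}(\lambda_k - \lambda_{k-r}) = \sum_{k=n-r+1}^{n}\lambda_k \le r\lambda_n$ together with (iii): given $\varepsilon>0$, split the sum at an index $N$ beyond which $\|d_k\|_C<\varepsilon$; the finite head is a fixed constant divided by $\lambda_n\to\infty$, hence tends to $0$, while the tail is $\le r\varepsilon$. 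Thus the whole left-hand side tends to $0$ in the $C$-norm, i.e. $f\in S(\Lambda^r)$.

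For the necessity, (iii) is immediate from the inclusion $S(\Lambda^r)\subset U$ furnished by Proposition~\ref{propC1}, since membership in $U$ is exactly uniform convergence $\|s_k(f)-f\|_C\to 0$. To obtain (iv) I would rearrange the same identity as
$$\lambda_{k-r}\,|s_k(f) - s_{k-r}(f)| \le \bigl|\lambda_k d_k - \lambda_{k-r} d_{k-r}\bigr| + (\lambda_k - \lambda_{k-r})\,|d_k|,$$
form the normalized sums, and pass to $\|\cdot\|_C$: the first term on the right tends to $0$ because $f\in S(\Lambda^r)$, and the second tends to $0$ by the estimate just used, now fed by the already-established condition (iii).

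The only genuinely delicate point is the passage from the pointwise inequalities to $C$-norm inequalities; this is harmless here because every quantity involved is a finite sum of nonnegative functions, so the triangle inequality for $\|\cdot\|_C$ applies termwise and no interchange of supremum and limit is required. The splitting argument for the remainder term is where \emph{uniform} (not merely pointwise) convergence is essential, and that is supplied precisely by condition (iii).
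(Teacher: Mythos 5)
Your proof is correct and takes essentially the approach the paper intends: it transfers the decomposition $\lambda_k d_k - \lambda_{k-r} d_{k-r} = \lambda_{k-r}\bigl(s_k(f)-s_{k-r}(f)\bigr) + (\lambda_k - \lambda_{k-r})d_k$ from the proof of Lemma~\ref{lemmak1} to the $C$-metric, which is precisely the ``analogous proof'' the paper omits. The treatment of the remainder term via $\sum_{k=0}^{n}(\lambda_k-\lambda_{k-r}) = \sum_{k=n-r+1}^{n}\lambda_k \le r\lambda_n$ and the head/tail split using the uniformity in (iii) is exactly right.
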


\begin{lemma}\label{lemmaC2}
A function $f$ belongs to $S(\Lambda^r)$ if and only if
\begin{itemize}[noitemsep,topsep=0pt]
\item[{\rm (iii')}] $\displaystyle \lim_{n\to \infty} \|\sigma_n (f) - f \|_C = 0$
\end{itemize}
and condition {\rm (iv)} is satisfied, where 
$$\sigma_n (f) = \sigma_n (f,t) := \frac{1}{\lambda_n} \sum_{\substack{0 \leq k \leq n \\ r | n-k}} (\lambda_{k} - \lambda_{k-r}) s_k (f,t).$$
\end{lemma}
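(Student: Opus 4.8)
The final statement to prove is Lemma \ref{lemmaC2}, which characterizes membership in $S(\Lambda^r)$ via convergence of the delayed Cesàro-type means $\sigma_n(f)$ together with condition (iv). Let me plan a proof.

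The plan is to reduce this $C$-metric statement to the already-established scalar Lemma \ref{lemmak2} by working pointwise and then controlling everything uniformly in $t$. The conceptual engine is Lemma \ref{lemmaC1}: since condition (iv) is common to both lemmas, the entire content of Lemma \ref{lemmaC2} is the equivalence, \emph{under the standing assumption that (iv) holds}, of uniform convergence $\|s_k(f)-f\|_C\to 0$ (condition (iii)) with uniform convergence $\|\sigma_n(f)-f\|_C\to 0$ (condition (iii$'$)). So I would first invoke Lemma \ref{lemmaC1} to replace the target ``$f\in S(\Lambda^r)$'' by the pair (iii)--(iv), and then the whole task becomes showing (iii)$\,\wedge\,$(iv)$\,\Leftrightarrow\,$(iii$'$)$\,\wedge\,$(iv).

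First I would prove the direction (iii)$\,\Rightarrow\,$(iii$'$), which needs no appeal to (iv). The point is that $\sigma_n(f)$ is a weighted average of the partial sums $s_k(f)$ lying in the residue class $k\equiv n\pmod r$: writing out the telescoping/summation identity noted in the excerpt, the weights $(\lambda_k-\lambda_{k-r})$ over $0\le k\le n$, $r\mid n-k$, sum to exactly $\lambda_n$ (since $\lambda_{-1}=\dots=\lambda_{-r}=0$), so $\sigma_n(f)$ is a genuine convex-type combination. Hence
\begin{gather*}
\|\sigma_n(f)-f\|_C = \left\| \frac{1}{\lambda_n} \sum_{\substack{0 \leq k \leq n \\ r \mid n-k}} (\lambda_k-\lambda_{k-r})\bigl(s_k(f)-f\bigr) \right\|_C \leq \frac{1}{\lambda_n} \sum_{\substack{0 \leq k \leq n \\ r \mid n-k}} (\lambda_k-\lambda_{k-r}) \|s_k(f)-f\|_C.
\end{gather*}
If $\|s_k(f)-f\|_C\to 0$, then the right-hand side is a regular (Toeplitz-type) average of a null sequence with nonnegative weights summing to $\lambda_n\to\infty$, and a standard Toeplitz/Kronecker argument gives $\|\sigma_n(f)-f\|_C\to 0$. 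Thus (iii)$\,\Rightarrow\,$(iii$'$) is unconditional, and combined trivially with (iv) this yields one implication of the lemma.

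The reverse direction, (iii$'$)$\,\wedge\,$(iv)$\,\Rightarrow\,$(iii), is where condition (iv) is essential and where I expect the main obstacle. Here I would mimic the scalar proof of Lemma \ref{lemmak2}: one writes $s_n(f)-f$ as $\bigl(s_n(f)-\sigma_n(f)\bigr)+\bigl(\sigma_n(f)-f\bigr)$, the second term being uniformly small by (iii$'$). For the first term one expresses $s_n(f)-\sigma_n(f)$ as a weighted combination of the differences $s_n(f)-s_k(f)$ over $k\equiv n\pmod r$, $k\le n$, and bounds each such difference by a telescoping sum of consecutive $r$-step gaps $s_j(f)-s_{j-r}(f)$; the resulting double-sum estimate, after collecting the $\lambda_{k-r}$-weights, is controlled uniformly in $t$ precisely by the quantity appearing in (iv). The delicacy is bookkeeping the residue classes modulo $r$ and passing every inequality under $\|\cdot\|_C$ before taking suprema, so that the $C$-norm bound reduces cleanly to the scalar estimate already handled in \cite{Kor}; because these manipulations are exact uniform analogues of the numerical case, I would state the reduction explicitly and then refer to Lemma \ref{lemmak2} (and its proof) to close the argument, rather than reproducing the computation in full.
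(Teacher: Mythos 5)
Your proposal is correct and is precisely the argument the paper has in mind: the paper explicitly omits this proof as ``analogous'' to the scalar Lemma~\ref{lemmak2} and to Lemma~\ref{lemmaC1}, and your reduction --- the telescoping identity showing the weights $(\lambda_k-\lambda_{k-r})$ sum to $\lambda_n$, the Toeplitz-type argument for (iii)$\Rightarrow$(iii$'$), and the bound $\|s_n(f)-\sigma_n(f)\|_C\le \|\tfrac{1}{\lambda_n}\sum_{k=r}^n\lambda_{k-r}|s_k(f)-s_{k-r}(f)|\|_C$ for the converse --- is exactly that analogue carried out in the $C$-norm.
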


\begin{theorem}\label{thmC1}
The set $S(\Lambda^r)$ endowed with the norm $\|.\|_{S(\Lambda^r)}$ is a Banach space.
\end{theorem}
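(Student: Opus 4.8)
The plan is to transplant the completeness argument behind Theorem~\ref{thm1} to the function space, now working in the $C$-metric. First I would extract a candidate limit. If $\{f_\ell\}$ is Cauchy in $S(\Lambda^r)$, then Proposition~\ref{propC1} gives $\|f_\ell - f_j\|_U \le \|f_\ell - f_j\|_{S(\Lambda^r)}$, so $\{f_\ell\}$ is Cauchy in $U$; since $U$ is a Banach space (see \cite{Mor}), there is an $f \in U$ with $\|f_\ell - f\|_U \to 0$. Membership $f \in U$ already yields condition (iii) of Lemma~\ref{lemmaC1}, and from $\|f_\ell - f\|_U \to 0$ together with $\|s_k(f_\ell - f)\|_C \le \|f_\ell - f\|_U$ I get $\|s_k(f_\ell) - s_k(f)\|_C \to 0$ for every fixed $k$.

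Next I would show $\|f_\ell - f\|_{S(\Lambda^r)} \to 0$, imitating the two displayed estimates in the proof of Theorem~\ref{thm1}. For a fixed index $n$, the bound $\|s_k(f_\ell) - s_k(f)\|_C \le \|f_\ell - f\|_U$ yields the $C$-analogue of the first estimate,
\[
\left\| \frac{1}{\lambda_n} \sum_{k=0}^n |\lambda_k(s_k(f_\ell) - s_k(f)) - \lambda_{k-r}(s_{k-r}(f_\ell) - s_{k-r}(f))| \right\|_C \le \|f_\ell - f\|_U \cdot \frac{1}{\lambda_n} \sum_{k=0}^n (\lambda_k + \lambda_{k-r}),
\]
whose right-hand side tends to $0$ as $\ell \to \infty$ for each \emph{fixed} $n$. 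Given $\varepsilon > 0$, pick $N$ with $\|f_j - f_\ell\|_{S(\Lambda^r)} < \varepsilon$ for $j,\ell \ge N$; for a fixed $n$ and fixed $j \ge N$, choosing the auxiliary index $\ell \ge N$ large enough (depending on $n$) and applying the triangle inequality exactly as in the second estimate bounds the level-$n$ quantity for $f_j - f$ by $\|f_j - f_\ell\|_{S(\Lambda^r)} + \varepsilon \le 2\varepsilon$. Since this holds for every $n$, taking the supremum yields $\|f_j - f\|_{S(\Lambda^r)} \le 2\varepsilon$, i.e.\ $\|f_\ell - f\|_{S(\Lambda^r)} \to 0$.

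It then remains to verify that $f$ itself lies in $S(\Lambda^r)$, i.e.\ that condition (iv) of Lemma~\ref{lemmaC1} holds for $f$. Writing $g = f - f_\ell$ and using
\[
\lambda_k(s_k(g) - g) - \lambda_{k-r}(s_{k-r}(g) - g) = \big[\lambda_k s_k(g) - \lambda_{k-r} s_{k-r}(g)\big] - (\lambda_k - \lambda_{k-r}) g
\]
together with the telescoping estimate $\frac{1}{\lambda_n} \sum_{k=0}^n (\lambda_k - \lambda_{k-r}) \le r$ and Proposition~\ref{propC1}, the $g$-contribution is controlled uniformly in $n$ by $(1+r)\|f - f_\ell\|_{S(\Lambda^r)}$. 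Splitting $s_k(f) - f$ through $f_\ell$ therefore gives
\[
\left\| \frac{1}{\lambda_n} \sum_{k=0}^n |\lambda_k(s_k(f) - f) - \lambda_{k-r}(s_{k-r}(f) - f)| \right\|_C \le \left\| \frac{1}{\lambda_n} \sum_{k=0}^n |\lambda_k(s_k(f_\ell) - f_\ell) - \lambda_{k-r}(s_{k-r}(f_\ell) - f_\ell)| \right\|_C + (1+r)\|f - f_\ell\|_{S(\Lambda^r)}.
\]
Choosing $\ell$ large makes the last term as small as desired (by the norm convergence just established), and then letting $n \to \infty$ kills the first term since $f_\ell \in S(\Lambda^r)$; hence condition (iv) holds and $f \in S(\Lambda^r)$.

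I expect the main obstacle to be the legitimacy of interchanging the three limiting operations present throughout — the limit in the sequence index, the supremum over $n$, and the maximum over $t$ hidden in $\|\cdot\|_C$. The resolution is that for each fixed $n$ only the finitely many partial sums $s_0, \ldots, s_n$ occur, so the uniform (in $t$) convergence $s_k(f_\ell) \to s_k(f)$ justifies passing to the limit inside both the finite sum and the maximum; the genuinely delicate point is keeping the estimates uniform in $n$, which is supplied by the Cauchy bound in the norm-convergence step and by the $n$-independent telescoping bound $\frac{1}{\lambda_n}\sum_{k=0}^n(\lambda_k - \lambda_{k-r}) \le r$ in the membership step.
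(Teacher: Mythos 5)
Your argument is correct and is precisely the ``analogous proof'' the paper omits: you transplant the two displayed estimates from the proof of Theorem~\ref{thm1} to the $C$-metric, using completeness of $U$ and Proposition~\ref{propC1} to extract the limit, and your final membership step (with the $n$-uniform telescoping bound $\frac{1}{\lambda_n}\sum_{k=0}^{n}(\lambda_k-\lambda_{k-r})\le r$) correctly verifies the defining condition of $S(\Lambda^r)$. No gaps; this matches the intended proof.
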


\begin{theorem}\label{thmC2}
If $f \in S(\Lambda^r)$, then
\begin{align}\label{eqC}
\lim_{m\to \infty} \|s_m (f) - f \|_{S(\Lambda^r)} = 0.
\end{align}
\end{theorem}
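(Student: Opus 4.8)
The plan is to reduce everything to conditions (iii) and (iv) of Lemma \ref{lemmaC1}, which hold precisely because $f \in S(\Lambda^r)$. Unlike the sequence statement of Theorem \ref{thm2}, here $s_m(f)-f$ is a genuine function and the sequence computation cannot be quoted verbatim, so the first task is to identify the Fourier partial sums of $g := s_m(f) - f$. Since $s_m(f)$ is a trigonometric polynomial of degree $m$, one has $s_k(g) = s_k(s_m(f)) - s_k(f)$, which equals $0$ for $k \le m$ and equals $s_m(f) - s_k(f)$ for $k > m$. Writing $\Phi_n := \frac{1}{\lambda_n}\sum_{k=0}^{n} |\lambda_k s_k(g) - \lambda_{k-r} s_{k-r}(g)|$, only the indices $n > m$ contribute, so that $\|g\|_{S(\Lambda^r)} = \sup_{n > m}\|\Phi_n\|_C$, and the goal becomes a uniform (in $n$) estimate that tends to $0$ as $m \to \infty$.

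Next I would split each summand pointwise, exactly as in the proof of Theorem \ref{thm2}. For $k > m+r$ the elementary identity $\lambda_k(s_m(f)-s_k(f)) - \lambda_{k-r}(s_m(f)-s_{k-r}(f)) = (\lambda_k - \lambda_{k-r})(s_m(f)-s_k(f)) + \lambda_{k-r}(s_{k-r}(f)-s_k(f))$ applies, while for $m < k \le m+r$ the summand is simply $\lambda_k|s_m(f)-s_k(f)|$ because $s_{k-r}(g) = 0$. This yields a pointwise bound $\Phi_n \le \Phi_n^{(0)} + \Phi_n^{(1)} + \Phi_n^{(2)}$, where $\Phi_n^{(0)}$ and $\Phi_n^{(1)}$ collect the terms carrying $|s_m(f)-s_k(f)|$ and $\Phi_n^{(2)} = \frac{1}{\lambda_n}\sum_{k=m+r+1}^{n}\lambda_{k-r}|s_k(f)-s_{k-r}(f)|$ collects the $r$-difference terms.

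For the first two pieces I would exploit uniform tail smallness: with $\delta_m := \sup_{k > m}\|s_m(f)-s_k(f)\|_C \le 2\sup_{j \ge m}\|s_j(f)-f\|_C$, condition (iii) gives $\delta_m \to 0$. Bounding $|s_m(f)-s_k(f)| \le \delta_m$ and telescoping the coefficients via $\sum_{k=m+1}^{m+r}\lambda_k + \sum_{k=m+r+1}^{n}(\lambda_k - \lambda_{k-r}) = \sum_{k=n-r+1}^{n}\lambda_k \le r\lambda_n$ yields $\|\Phi_n^{(0)} + \Phi_n^{(1)}\|_C \le r\delta_m$ uniformly in $n$. For $\Phi_n^{(2)}$, the decisive observation is that all terms are nonnegative and $m+r+1 > r$, so $\Phi_n^{(2)} \le V_n := \frac{1}{\lambda_n}\sum_{k=r}^{n}\lambda_{k-r}|s_k(f)-s_{k-r}(f)|$ pointwise; hence $\|\Phi_n^{(2)}\|_C \le \|V_n\|_C$ and $\sup_{n > m}\|\Phi_n^{(2)}\|_C \le \sup_{n > m}\|V_n\|_C \to 0$ by condition (iv). Combining the pieces gives $\|s_m(f)-f\|_{S(\Lambda^r)} \le r\delta_m + \sup_{n > m}\|V_n\|_C \to 0$, which is \eqref{eqC}.

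The main obstacle is the correct handling of the $C$-metric: condition (iv) controls the maximum of the averaged sum $V_n$, not a sum of individual maxima, so one must keep the maximum outside the summation and use the pointwise domination $\Phi_n^{(2)} \le V_n$ rather than estimating term by term. Once the partial sums of $g$ are correctly identified, the coefficient telescoping and the bookkeeping of the boundary block $m < k \le m+r$ are routine.
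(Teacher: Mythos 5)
Your proposal is correct and follows essentially the same route as the paper: identify the partial sums of $s_m(f)-f$ (zero up to index $m$, then $\pm(s_k(f)-s_m(f))$), split each summand by the identity $\lambda_k u_k-\lambda_{k-r}u_{k-r}=(\lambda_k-\lambda_{k-r})u_k+\lambda_{k-r}(u_k-u_{k-r})$, telescope the coefficients to bound the oscillation part by $r$ times the uniform tail oscillation (condition (iii) via Proposition \ref{propC1}), and dominate the remaining part pointwise by the averaged sum controlled by condition (iv) of Lemma \ref{lemmaC1}. Your handling of the boundary block $m<k\le m+r$ and of keeping the $C$-norm outside the sum is, if anything, slightly more careful than the paper's write-up, but the decomposition and the two key estimates are identical.
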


\begin{proof}
Since the sequence of partial sums of the Fourier series of the difference $f - s_m (f)$ is
\begin{gather*}
(0,0,\ldots,\overset{\undergroup{\scriptstyle m}}{0},\overset{\undergroup{\scriptstyle m+1}}{s_{m+1} (f) - s_{m} (f)},{s_{m+2} (f) - s_{m} (f)},\ldots),
\end{gather*}
then 
{\allowdisplaybreaks
\begin{gather*}
\|s_m (f) - f \|_{S(\Lambda^r)}\\
= \sup_{n \geq 1} \frac{1}{\lambda_{m+n}} \bigg\| \sum_{\substack{1 \leq b \leq r \\ b \leq n}} \lambda_{m+b}|(s_{m+b}(f) - s_{m}(f)| \\
+ \sum_{a=1}^{[n/r]} \sum_{\substack{1 \leq b \leq r \\ ar+b \leq n}} |\lambda_{m+ar+b} (s_{m+ar+b}(f) - s_{m}(f)) - \lambda_{m+ar-r+b} (s_{m+ar-r+b}(f) - s_{m}(f))| \bigg\|_C \\
\leq \sup_{n \geq 1} \frac{1}{\lambda_{m+n}} \bigg\| \sum_{a=0}^{[n/r]} \sum_{\substack{1 \leq b \leq r \\ ar+b \leq n}} (\lambda_{m+ar+b} - \lambda_{m+ar-r+b}) |s_{m+ar+b}(f) - s_{m}(f)|\\
+ \sum_{a=0}^{[n/r]} \sum_{\substack{1 \leq b \leq r \\ ar+b \leq n}} \lambda_{m+ar-r+b} |s_{m+ar+b}(f) - s_{m+ar-r+b}(f)| \bigg\|_C \\
\leq r \sup_{j,k > m-r} \|s_{j}(f) - s_{k}(f)\|_C + \sup_{n \geq m+1} \bigg\| \frac{1}{\lambda_n} \sum_{k=m+1}^{n} \lambda_{k-r} |s_k (f) - s_{k-r} (f)| \bigg\|_C,
\end{gather*}}
where $0 \leq a$, $1\leq b \leq r$. Applying Proposition \ref{propC1} and Lemma \ref{lemmaC1}, respectively, results in \eqref{eqC} to be proved.
\end{proof}

In the following, our goal is to extend the well-known Denjoy--Luzin theorem presented below (see \cite[p. 232]{Zyg}).

\begin{theorem}[Theorem of Denjoy--Luzin]\label{thmDL}
If 
\begin{gather}\label{sumakbk}
\sum_{k=1}^{\infty} (a_k \cos{kt} + b_k \sin{kt})
\end{gather}
converges absolutely for $t$ belonging to a set $A$ of positive measure, then $\displaystyle \sum_{k=1}^{\infty} (|a_k| + |b_k|)$ converges.
\end{theorem}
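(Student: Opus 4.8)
The plan is to reduce the two series $\sum_k |a_k|$ and $\sum_k |b_k|$ to a single scalar series of amplitudes. Writing each term of \eqref{sumakbk} in amplitude--phase form $a_k \cos kt + b_k \sin kt = \rho_k \cos(kt - \varphi_k)$ with $\rho_k := \sqrt{a_k^2 + b_k^2}$, and noting $|a_k| + |b_k| \le 2\rho_k$, it suffices to prove $\sum_k \rho_k < \infty$. In this language, the hypothesis says that the nonnegative series $g(t) := \sum_{k=1}^\infty \rho_k |\cos(kt - \varphi_k)|$ is finite for every $t$ in the positive-measure set $A$.

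The core difficulty is that pointwise finiteness of $g$ on $A$ carries no uniform or integrable bound, so the second step is to localize to a level set. Writing $A = \bigcup_N \{t \in A : g(t) \le N\}$ and using $|A| > 0$, I would fix $N$ and a measurable $B \subseteq A$ with $|B| > 0$ on which $g \le N$. Since the partial sums $g_n := \sum_{k=1}^n \rho_k |\cos(k\cdot - \varphi_k)|$ increase to $g$, the monotone convergence theorem (Beppo Levi for the series of nonnegative terms) yields $\sum_{k=1}^\infty \rho_k \int_B |\cos(kt - \varphi_k)|\,dt = \int_B g\,dt \le N|B| < \infty$, which is the crucial integrable information.

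The third step extracts a uniform lower bound for the coefficient integrals. Using $|\cos\theta| \ge \cos^2\theta = \tfrac12 + \tfrac12 \cos 2\theta$ gives $\int_B |\cos(kt - \varphi_k)|\,dt \ge \tfrac12|B| + \tfrac12 \int_B \cos(2kt - 2\varphi_k)\,dt$. Expanding $\cos(2kt - 2\varphi_k)$ and applying the Riemann--Lebesgue lemma to the indicator $\mathbf 1_B \in L^1[0,2\pi)$, both $\int_B \cos 2kt\,dt$ and $\int_B \sin 2kt\,dt$ tend to $0$, so for all sufficiently large $k$ one has $\int_B |\cos(kt - \varphi_k)|\,dt \ge \tfrac14|B|$. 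Combining this with the finiteness of $\sum_k \rho_k \int_B |\cos(kt - \varphi_k)|\,dt$ from the previous step forces the tail $\sum_k \rho_k$ to converge, and hence $\sum_k(|a_k| + |b_k|) \le 2\sum_k \rho_k$ converges.

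I expect the main obstacle to be exactly the passage from pointwise to integral information, that is, the truncation to a positive-measure level set $B$ and the legitimacy of term-by-term integration of the series over $B$; once integrability on $B$ is secured, the elementary quadratic bound for $|\cos|$ and the Riemann--Lebesgue lemma render the remainder routine.
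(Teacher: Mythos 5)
Your proof is correct and is essentially the classical argument the paper relies on for this theorem (it only cites Zygmund here): amplitude--phase form, restriction to a positive-measure level set where the sum is bounded, term-by-term integration, and the lower bound $|\cos\theta|\ge\cos^2\theta$ combined with the Riemann--Lebesgue lemma. The same framework is what the paper later adapts in its Theorem~\ref{thmDL2}, so no issues.
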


This theorem was extended for $\Lambda$-strongly convergent trigonometric series by M\'oricz in \cite{Mor}.

\begin{theorem}\label{thmDLM}
If the $n$th partial sums $s_n (t)$ of the series \eqref{sumakbk} converge $\Lambda$-strongly for $t$ belonging to a set $A$ of positive measure or of second category, then 
\begin{gather}\label{M1}
\lim_{n\to \infty} \frac{1}{\lambda_n} \sum_{k=1}^{n} \lambda_{k-1} (|a_k| + |b_k|) = 0.
\end{gather}
Consequently, if $f\in C$ and the $n$th partial sums $s_n (f,t)$ of the Fourier series \eqref{fourC} converge uniformly $\Lambda$-strongly to $f(t)$ everywhere, then coefficients $a_k = a_k(f)$ and $b_k = b_k(f)$ satisfy \eqref{M1}.
\end{theorem}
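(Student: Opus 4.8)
The plan is to reduce the statement to an integral estimate of Denjoy--Luzin type. First I would invoke the ($r=1$) characterisation of $\Lambda$-strong convergence from \cite{Mor} (the analogue of Lemma \ref{lemmak1}): since $s_k(t)-s_{k-1}(t)=a_k\cos kt+b_k\sin kt$, $\Lambda$-strong convergence of $\{s_n(t)\}$ at a point $t$ forces
$$T_n(t):=\frac{1}{\lambda_n}\sum_{k=1}^{n}\lambda_{k-1}\,|a_k\cos kt+b_k\sin kt|\longrightarrow 0 .$$
Thus the hypothesis gives $T_n(t)\to 0$ for every $t\in A$. Writing $\rho_k:=(a_k^2+b_k^2)^{1/2}$ and $a_k\cos kt+b_k\sin kt=\rho_k\cos(kt-\theta_k)$, and using $\rho_k\le|a_k|+|b_k|\le\sqrt2\,\rho_k$, the desired conclusion \eqref{M1} is equivalent to $\tfrac{1}{\lambda_n}\sum_{k=1}^{n}\lambda_{k-1}\rho_k\to 0$.

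The technical heart, which I expect to be the main obstacle, is the uniform lower bound: for any measurable $B$ with $|B|>0$ one has $\int_B|\cos(kt-\theta)|\,dt\to\frac{2}{\pi}|B|$ as $k\to\infty$, uniformly in $\theta$. I would prove this from the Fourier expansion $|\cos x|=\frac{2}{\pi}+\frac{4}{\pi}\sum_{j\ge1}\frac{(-1)^{j+1}}{4j^2-1}\cos 2jx$: integrating $|\cos(kt-\theta)|$ over $B$ leaves a series whose tail ($j>J$) is controlled uniformly by $\frac{4}{\pi}|B|\sum_{j>J}(4j^2-1)^{-1}$ and whose finitely many head terms tend to $0$ by the Riemann--Lebesgue lemma applied to $\chi_B$, uniformly in $\theta$. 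Consequently there are $k_0$ and $\delta:=\frac{1}{\pi}|B|>0$ with $\int_B|\cos(kt-\theta_k)|\,dt\ge\delta$ for all $k\ge k_0$, whence, integrating $T_n$ term by term and discarding the nonnegative terms with $k<k_0$,
$$\int_B T_n(t)\,dt\ \ge\ \frac{\delta}{\lambda_n}\sum_{k=k_0}^{n}\lambda_{k-1}\rho_k .$$

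For the positive-measure case I would apply Egorov's theorem to extract $E\subseteq A$ with $|E|>0$ on which $T_n\to 0$ uniformly; then $\int_E T_n\to0$, and combined with the lower bound above (with $B=E$) this yields $\frac{1}{\lambda_n}\sum_{k=k_0}^{n}\lambda_{k-1}\rho_k\to0$. Since $\lambda_n\to\infty$, the fixed initial block $\frac{1}{\lambda_n}\sum_{k=1}^{k_0-1}\lambda_{k-1}\rho_k\to0$ as well, giving \eqref{M1}. For the second-category case I would argue by Baire category: fixing $\varepsilon>0$, the sets $F_{N}:=\{t:\ T_n(t)\le\varepsilon\ \text{for all }n\ge N\}$ are closed (each $T_n$ is continuous) and $A\subseteq\bigcup_N F_N$ since $T_n\to0$ on $A$; as $A$ is of second category some $F_N$ is non-meager, hence contains an interval $I$. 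Integrating over $I$ gives $\int_I T_n\le\varepsilon|I|$ for $n\ge N$, and the lower bound (with $B=I$, $\delta=\frac1\pi|I|$) yields $\limsup_n\frac{1}{\lambda_n}\sum_{k=1}^{n}\lambda_{k-1}\rho_k\le\pi\varepsilon$; letting $\varepsilon\to0$ gives \eqref{M1}. Note the ratio $|I|/\delta=\pi$ is independent of the size of $I$, so shrinking $I$ as $\varepsilon\to 0$ causes no loss.

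Finally, the concluding assertion is immediate: if $f\in C$ and $s_n(f,t)$ converges uniformly $\Lambda$-strongly to $f$ everywhere, then in particular $\{s_n(f,t)\}$ converges $\Lambda$-strongly at each $t\in[0,2\pi)$, a set of positive measure, so the first part applies to the Fourier coefficients $a_k(f),b_k(f)$.
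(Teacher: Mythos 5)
Your proposal is correct, but note that the paper itself gives no proof of Theorem \ref{thmDLM}: it is quoted as M\'oricz's result from \cite{Mor}, so the only in-paper material to compare against is the proof of Theorem \ref{thmDL2} (and its adaptation in Theorem \ref{thmDL3}), which treats a genuinely different hypothesis. There the author assumes that \emph{both} the cosine-type and the sine-type sums converge, writes the $k$th block as $\rho_k(t)\cos(\cdot)$ and $\rho_k(t)\sin(\cdot)$, and exploits the pointwise identity $\cos^2+\sin^2=1\le|\cos|+|\sin|$ to bound $\int_F\rho_k$ directly, with no oscillatory-integral lemma needed. That trick is unavailable for Theorem \ref{thmDLM}, where only the single combined series \eqref{sumakbk} is given, so your route --- the classical Denjoy--Luzin lower bound $\int_B|\cos(kt-\theta)|\,dt\ge\delta(B)>0$ for $k\ge k_0$, uniformly in $\theta$, via Riemann--Lebesgue --- is the right (and essentially M\'oricz's original) one. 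Your handling of the two extra points that distinguish the $\Lambda$-strong version from the classical one is also sound: because the conclusion \eqref{M1} is a limit statement rather than mere summability, pointwise boundedness on a set of positive measure does not suffice, and you correctly substitute Egorov (positive-measure case) and the Baire/closed-sets-$F_N$ argument with the scale-invariant ratio $|I|/\delta=\pi$ (second-category case). One could simplify your key lemma by using $|\cos u|\ge\cos^2u=\tfrac12(1+\cos 2u)$, as in \cite[p.~232]{Zyg}, instead of the full Fourier expansion of $|\cos|$, but this is cosmetic. The reduction via the $r=1$ analogue of Lemma \ref{lemmak1} and the final ``Consequently'' step are both fine.
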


First, we extend Theorem \ref{thmDL} for single sine and cosine series.

\begin{theorem}\label{thmDL2}
If 
\begin{gather*}
\sum_{k=1}^{\infty} |a_{2k-1} \cos{(2k-1)t} + a_{2k} \cos{{2k}t}|\quad {\textrm and} \quad \sum_{k=1}^{\infty} |a_{2k-1} \sin{(2k-1)t} + a_{2k} \sin{{2k}t}|
\end{gather*}
converge for $t$ belonging to a set $A$ of positive measure, then $\displaystyle \sum_{k=1}^{\infty} |a_k|$ converges.
\end{theorem}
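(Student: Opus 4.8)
The plan is to exploit the special structure of the two series: within each pair, the cosine and sine terms differ in frequency by exactly $1$, independently of $k$. Write $C_k(t) := a_{2k-1}\cos(2k-1)t + a_{2k}\cos 2kt$ and $S_k(t) := a_{2k-1}\sin(2k-1)t + a_{2k}\sin 2kt$, so that by hypothesis $\sum_k |C_k(t)|$ and $\sum_k |S_k(t)|$ converge for every $t\in A$. The first thing I would do is compute $C_k(t)^2 + S_k(t)^2$. Expanding the squares and applying the addition formula, the cross terms collapse to $\cos\bigl((2k-1)t - 2kt\bigr) = \cos t$, giving
$$C_k(t)^2 + S_k(t)^2 = a_{2k-1}^2 + a_{2k}^2 + 2 a_{2k-1} a_{2k} \cos t.$$
The crucial point is that the right-hand side carries no $k$-dependent oscillation, because the frequency difference $2k-(2k-1)=1$ is constant; this is exactly what makes the pairing effective and removes any need for the Riemann--Lebesgue/oscillation argument used in the classical Denjoy--Luzin proof.

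Next I would record the elementary lower bound
$$a_{2k-1}^2 + a_{2k}^2 + 2a_{2k-1}a_{2k}\cos t \;\geq\; (1 - |\cos t|)\,(a_{2k-1}^2 + a_{2k}^2),$$
which follows by writing the difference of the two sides as $|\cos t|\,(a_{2k-1} + a_{2k})^2$ when $\cos t \geq 0$, and as $|\cos t|\,(a_{2k-1} - a_{2k})^2$ when $\cos t < 0$. Since the exceptional set $\{t : |\cos t| = 1\}$ is countable, hence of measure zero, the positive-measure hypothesis on $A$ guarantees a point $t_0 \in A$ with $\eta := 1 - |\cos t_0| > 0$.

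Finally, at this fixed $t_0$ I would chain the estimates $(|C_k(t_0)| + |S_k(t_0)|)^2 \geq C_k(t_0)^2 + S_k(t_0)^2 \geq \eta\,(a_{2k-1}^2 + a_{2k}^2) \geq \tfrac{\eta}{2}(|a_{2k-1}| + |a_{2k}|)^2$, where the first step is $(|u|+|v|)^2 \geq u^2+v^2$ and the last uses $a^2+b^2 \geq \tfrac12(|a|+|b|)^2$. Taking square roots yields
$$|C_k(t_0)| + |S_k(t_0)| \;\geq\; \sqrt{\tfrac{\eta}{2}}\,\bigl(|a_{2k-1}| + |a_{2k}|\bigr).$$
Summing over $k$ and using that both $\sum_k |C_k(t_0)|$ and $\sum_k |S_k(t_0)|$ converge (as $t_0 \in A$) gives $\sum_k (|a_{2k-1}| + |a_{2k}|) < \infty$, which is precisely $\sum_{k=1}^\infty |a_k| < \infty$.

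I expect the only genuine obstacle to be spotting the identity in the first step; once the cross term is seen to reduce to $\cos t$, the remainder is routine. It is worth remarking that the argument uses the positive measure of $A$ only to produce a single admissible point $t_0 \notin \{0,\pm\pi,\ldots\}$, so the conclusion in fact holds whenever $A$ is not contained in that countable exceptional set. One could alternatively integrate the displayed identity over $A$ to stay closer in spirit to the proofs of Theorems \ref{thmDL} and \ref{thmDLM}, but this is unnecessary here.
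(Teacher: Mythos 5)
Your proof is correct, and it takes a genuinely shorter route than the paper's. Both arguments hinge on the same two facts: the identity $C_k(t)^2+S_k(t)^2=a_{2k-1}^2+a_{2k}^2+2a_{2k-1}a_{2k}\cos t=\rho_k(t)^2$, and a lower bound $\rho_k(t)\geq c\,(|a_{2k-1}|+|a_{2k}|)$ valid wherever $|\cos t|$ stays away from $1$. The paper then follows Zygmund's Denjoy--Luzin template: it writes the pair in amplitude--phase form $\rho_k\cos((2k-1)t+f_k)$, $\rho_k\sin((2k-1)t+f_k)$, passes to a subset $F\subseteq A$ of positive measure on which $\sum_k(|C_k|+|S_k|)$ is uniformly bounded, and integrates $\rho_k=\rho_k(\cos^2+\sin^2)\leq |C_k|+|S_k|$ over $F$. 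You observe that since \emph{both} combinations are assumed summable at each $t\in A$, the inequality $\rho_k(t)\leq |C_k(t)|+|S_k(t)|$ already holds pointwise (via $(|u|+|v|)^2\geq u^2+v^2$), so no integration and no uniform-boundedness step are needed: a single point $t_0\in A$ with $\cos t_0\neq\pm 1$ suffices, and your cleaner bound $\rho_k(t_0)^2\geq(1-|\cos t_0|)(a_{2k-1}^2+a_{2k}^2)\geq\tfrac{1}{2}(1-|\cos t_0|)(|a_{2k-1}|+|a_{2k}|)^2$ closes the argument. This buys a strictly stronger statement -- the conclusion holds whenever $A\not\subseteq\{n\pi:n\in\mathbb{Z}\}$, positive measure being irrelevant -- at no cost in rigor. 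What the paper's integration route buys in exchange is structural uniformity: it runs in visual parallel with the classical proof of Theorem \ref{thmDL} and with Theorem \ref{thmDLM}, and it is the version that would survive if only one of the two series were assumed to converge (where a Riemann--Lebesgue argument over a set of positive measure becomes unavoidable). Your observation that the hypothesis here is strictly stronger than in that classical setting, and can be exploited pointwise, is a genuine simplification worth noting.
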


\begin{proof}
We follow the proof of the Denjoy--Luzin theorem as in \cite[pp. 232]{Zyg} with necessary modifications. We calculate
$$a_{2k-1} \cos{(2k-1)t} + a_{2k} \cos{{2k}t} = (a_{2k-1}+a_{2k}\cos{t}) \cos{(2k-1)t} - (a_{2k}\sin{t}) \sin{(2k-1)t},$$
and
$$a_{2k-1} \sin{(2k-1)t} + a_{2k} \sin{{2k}t} = (a_{2k-1}+a_{2k}\cos{t}) \sin{(2k-1)t} + (a_{2k}\sin{t}) \cos{(2k-1)t},$$
whence
$$a_{2k-1} \cos{(2k-1)t} + a_{2k} \cos{{2k}t} = \rho_k(t) \cos{((2k-1)t+f_k(t))}$$
and
$$a_{2k-1} \sin{(2k-1)t} + a_{2k} \sin{{2k}t} = \rho_k(t) \sin{((2k-1)t+f_k(t))}$$
where 
$$\rho_k(t) = \sqrt{a_{2k-1}^2+a_{2k}^2 + 2a_{2k-1} a_{2k}\cos{t}}$$
and $f_k(t)$ is from
$$\cos{f_k(t)}= \frac{a_{2k-1}+a_{2k}\cos{t}}{\rho_k(t)},\qquad \sin{f_k(t)}= \frac{a_{2k}\sin{t}}{\rho_k(t)}.$$
Now, we need that
\begin{gather}\label{eqb}
\rho_k(t) \geq C (|a_{2k-1}|+|a_{2k}|)
\end{gather}
is satisfied on a set $E \subseteq A$ of positive measure where the constant $C$ is independent of $k$ and $t$. \eqref{eqb} comes from the inequality
$$(1-C^2)(a_{2k-1}^2+a_{2k}^2) \geq 2|a_{2k-1} a_{2k}|(C+|\cos{t}|),$$
hence we just need to define $C$ small enough so that the set $E \subseteq A$ on which
$$|\cos{t}| \leq 1-C-C^2$$
holds is of positive measure. We set $C$ and thereby $E$ that way. Since $E\subseteq A$, there is a set $F \subseteq E$ of positive measure such that
\begin{gather*}
\sum_{k=1}^{\infty} \alpha_k (t) = \sum_{k=1}^{\infty} (|a_{2k-1} \cos{(2k-1)t} + a_{2k} \cos{{2k}t}| + |a_{2k-1} \sin{(2k-1)t} + a_{2k} \sin{{2k}t}|)
\end{gather*}
is bounded on $F$, say by bound $M$. Hence we obtain the required estimation
\begin{gather*}
\sum_{k=1}^{\infty} (|a_{2k-1}|+|a_{2k}|) \leq \frac{1}{C} \sum_{k=1}^{\infty} \int_F \rho_k(t)\\
= \frac{1}{C} \sum_{k=1}^{\infty} \int_F \rho_k(t) (\cos^2{((2k-1)t+f_k(t))} + \sin^2{((2k-1)t+f_k(t))})\, dt\\
\leq \frac{1}{C} \sum_{k=1}^{\infty} \int_F \rho_k(t) (|\cos{((2k-1)t+f_k(t))}| + |\sin{((2k-1)t+f_k(t))}|)\, dt\\ = \frac{1}{C} \sum_{k=1}^{\infty} \int_F \alpha_k (t)\, dt \leq \frac{M}{C} |F|.\qedhere
\end{gather*}
\end{proof}

Second, we extend Theorem \ref{thmDL2} for $\Lambda^{2}$-strong convergent sine or cosine series.

\begin{theorem}\label{thmDL3}
If 
\begin{gather}\label{dl3}
s^1_n(t) = \sum_{k=1}^{n} a_k \cos{kt} \quad {\textrm and} \quad s^2_n(t) = \sum_{k=1}^{n} a_k \sin{kt}
\end{gather}
converge $\Lambda^{2}$-strongly for $t$ belonging to a set $A$ of positive measure, then  
\begin{gather}\label{dl4}
\lim_{n\to \infty} \frac{1}{\lambda_n} \sum_{k=1}^{n} \lambda_{k-1} |a_k| = 0.
\end{gather}
Consequently, if $f,g \in C$ has single Fourier series $\displaystyle \frac{1}{2} a_0 + \sum_{k=1}^{\infty} a_k \cos{kt}$ and $\displaystyle \sum_{k=1}^{\infty} a_k \sin{kt}$, respectively, which partial sums converge uniformly $\Lambda^{2}$-strongly to $f(t)$ and $g(t)$ everywhere, then coefficients $a_k$ satisfy \eqref{dl4}.
\end{theorem}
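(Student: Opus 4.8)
The plan is to combine the strong--convergence reduction of Lemma \ref{lemmak1} (with $r=2$) with the amplitude estimate \eqref{eqb} obtained in the proof of Theorem \ref{thmDL2}. First I would fix $t\in A$. By Lemma \ref{lemmak1}, the $\Lambda^2$-strong convergence of $s^1_n(t)$ and $s^2_n(t)$ yields, besides ordinary convergence, the two relations
\[
\frac{1}{\lambda_n}\sum_{k=2}^{n}\lambda_{k-2}\,|s^1_k(t)-s^1_{k-2}(t)|\to 0,\qquad \frac{1}{\lambda_n}\sum_{k=2}^{n}\lambda_{k-2}\,|s^2_k(t)-s^2_{k-2}(t)|\to 0,
\]
where $s^1_k(t)-s^1_{k-2}(t)=a_k\cos kt+a_{k-1}\cos(k-1)t$ and $s^2_k(t)-s^2_{k-2}(t)=a_k\sin kt+a_{k-1}\sin(k-1)t$. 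These are exactly the expressions treated in Theorem \ref{thmDL2}, only with the non-overlapping pair $(2k-1,2k)$ replaced by the consecutive pair $(k-1,k)$; hence I would write them as $\rho_k(t)\cos((k-1)t+f_k(t))$ and $\rho_k(t)\sin((k-1)t+f_k(t))$ with $\rho_k(t)=\sqrt{a_{k-1}^2+a_k^2+2a_{k-1}a_k\cos t}$, and invoke the identical computation leading to \eqref{eqb}: for a suitably small constant $C>0$ the inequality $\rho_k(t)\ge C(|a_{k-1}|+|a_k|)$ holds, for every $k$, on the set where $|\cos t|\le 1-C-C^2$.

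The crux is to make the bound independent of $t$. Choosing $C$ small enough that $A\cap\{\,|\cos t|\le 1-C-C^2\,\}$ is nonempty (possible since $|A|>0$ and the complementary set shrinks to measure zero as $C\to 0$), I would fix one point $t_0$ in this intersection, noting that $t_0$ is not a multiple of $\pi$. Adding the two displayed relations at $t=t_0$ and using $|\cos\theta|+|\sin\theta|\ge 1$ together with \eqref{eqb} gives
\[
\frac{C}{\lambda_n}\sum_{k=2}^{n}\lambda_{k-2}(|a_{k-1}|+|a_k|)\le \frac{1}{\lambda_n}\sum_{k=2}^{n}\lambda_{k-2}\big(|s^1_k(t_0)-s^1_{k-2}(t_0)|+|s^2_k(t_0)-s^2_{k-2}(t_0)|\big)\to 0.
\]
Since the left--hand side no longer depends on $t_0$, both of its nonnegative pieces tend to $0$; in particular, reindexing $\sum_{k=2}^{n}\lambda_{k-2}|a_{k-1}|=\sum_{j=1}^{n-1}\lambda_{j-1}|a_j|$ yields $\tfrac{1}{\lambda_n}\sum_{j=1}^{n-1}\lambda_{j-1}|a_j|\to 0$.

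It remains to restore the missing top term and reach the weight $\lambda_{k-1}$ of \eqref{dl4}. Here I would use ordinary convergence of $s^1_n(t_0)$ and $s^2_n(t_0)$ (condition (i) of Lemma \ref{lemmak1}): since $t_0$ is not a multiple of $\pi$, $a_n\cos nt_0\to 0$ and $a_n\sin nt_0\to 0$ force $a_n\to 0$, whence $\lambda_{n-1}|a_n|/\lambda_n\le |a_n|\to 0$ because $\Lambda$ is non-decreasing. Adding this to the previous limit gives $\tfrac{1}{\lambda_n}\sum_{j=1}^{n}\lambda_{j-1}|a_j|\to 0$, which is \eqref{dl4}. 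The ``Consequently'' clause is then immediate, since uniform $\Lambda^2$-strong convergence of the two Fourier series everywhere implies pointwise $\Lambda^2$-strong convergence on a set of positive measure, so \eqref{dl4} applies verbatim. The main obstacle I anticipate is the bookkeeping in the two index shifts --- passing from the overlapping pair structure to a clean per--coefficient bound, and converting the $\lambda_{k-2}$-weight produced by the $r=2$ difference into the $\lambda_{k-1}$-weight of \eqref{dl4}; the latter is precisely where the elementary fact $a_n\to 0$ is needed.
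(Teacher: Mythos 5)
Your argument is correct, and it reaches \eqref{dl4} by a route that is genuinely more elementary than the one the paper intends. The paper disposes of Theorem \ref{thmDL3} by extracting condition (ii) of Lemma \ref{lemmak1} and then declaring the rest ``analogous to Theorem \ref{thmDL2}'', whose proof is the classical Denjoy--Luzin scheme: establish $\rho_k(t)\geq C(|a_{k-1}|+|a_k|)$ on a positive-measure set $E\subseteq A$, pass to a further subset $F$ on which one has uniform control (for the weighted-limit version this requires Egorov's theorem rather than mere boundedness), and integrate over $F$ using $\cos^2+\sin^2=1\leq|\cos|+|\sin|$. You use the same algebraic decomposition and the same amplitude bound \eqref{eqb}, but then observe that because \emph{both} the cosine and the sine series are assumed to converge $\Lambda^2$-strongly, the lower bound $|\cos\theta|+|\sin\theta|\geq 1$ holds pointwise, and $\rho_k(t)$ depends on $t$ only through $\cos t$, uniformly in $k$; so evaluating at a single point $t_0\in A$ with $|\cos t_0|$ bounded away from $1$ already yields $C\lambda_n^{-1}\sum_{k=2}^{n}\lambda_{k-2}(|a_{k-1}|+|a_k|)\to 0$. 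This removes the need for Egorov and for any integration, and only uses positivity of $|A|$ to guarantee a point off $\pi\mathbb{Z}$. Your bookkeeping at the end --- reindexing $\sum_{k=2}^{n}\lambda_{k-2}|a_{k-1}|$ and restoring the top term via $\lambda_{n-1}|a_n|/\lambda_n\leq|a_n|\to 0$, with $a_n\to 0$ coming from ordinary convergence of both partial-sum sequences at $t_0$ --- is a step the paper leaves implicit but which is needed to convert the $\lambda_{k-2}$-weight into the $\lambda_{k-1}$-weight of \eqref{dl4}; you handle it correctly. The only caveat is cosmetic: like the paper, you implicitly treat the $a_k$ as real when forming $\rho_k$.
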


\begin{proof}
By Lemma \ref{lemmak1} (in the second case Lemma \ref{lemmaC1} is used), $\Lambda^{2}$-strong convergence implies for example, in the cosine case that
$$ \lim_{n\to \infty} \frac{1}{\lambda_n} \sum_{k=2}^{n} \lambda_{k-2} |s^1_k - s^1_{k-2}| = \lim_{n\to \infty} \frac{1}{\lambda_n} \sum_{k=2}^{n} \lambda_{k-2} |a_{k-1} \cos{(k-1)t} + a_{k} \cos{{k}t}| = 0,$$
and the proof is analogous to the one of the previous theorem, Theorem \ref{thmDL2}.
\end{proof}

\section{Results on Fourier series: $\textit{L}^{\textit{p}}$-metric}\label{sec4}

The results of Section \ref{sec3} can be reformulated if we substitute $L^p$-metric for $C$-metric. Here and in the sequel $1 \leq p < \infty$. Along with the usual notations let us call a function $f\in L^p$ to be in $S^p(\Lambda^r)$ if
$$ \lim_{n \to \infty} \left\| \frac{1}{\lambda_n} \sum_{k=0}^{n} |\lambda_k (s_k(f)-f) - \lambda_{k-r} (s_{k-r}(f)-f)| \right\|_p = 0,$$
and introduce the norm
\begin{align*}
\|f\|_{S^p(\Lambda^r)}:=\sup_{n \geq 0} \left\| \frac{1}{\lambda_n} \sum_{k=0}^{n} |\lambda_k s_k(f) - \lambda_{k-r} s_{k-r}(f)| \right\|_p,
\end{align*}
which is finite for every for $S^p(\Lambda^r)$.

The norm inequalities corresponding to the ones in Proposition \ref{propC1} are the following.

\begin{prop}\label{propLp1}
For every function $f\in L^p$ and $r\geq 2$ integer we have
$$\|f\|_{U^p} \leq \|f\|_{S^p(\Lambda^r)} \leq r \|f\|_{S^p(\Lambda)} \leq 2r \|f\|_{A}.$$ As a consequence, $A \subset S^p(\Lambda) \subset S^p(\Lambda^r) \subset U^p$.
\end{prop}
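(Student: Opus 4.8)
The plan is to transcribe the proof of Proposition~\ref{propC1} essentially verbatim, replacing the $C$-norm $\|\cdot\|_C=\max_t|\cdot|$ throughout by the ($2\pi$-normalized) $L^p$-norm $\|\cdot\|_p$. The whole $C$-metric argument rests on exactly two abstract features of the norm: subadditivity (the triangle inequality) and monotonicity with respect to the pointwise order on nonnegative functions, i.e.\ $0\le g\le h$ implies $\|g\|_p\le\|h\|_p$. Both hold for $\|\cdot\|_p$, the first by Minkowski's inequality and the second by monotonicity of the integral, so every estimate in the $C$-case carries over once these two tools are substituted for the corresponding properties of $\max_t$.

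Concretely, I would proceed in three steps, one per inequality. For $\|f\|_{U^p}\le\|f\|_{S^p(\Lambda^r)}$ I would start from the pointwise telescoping identity $s_n(f,t)=\frac{1}{\lambda_n}\sum_{0\le k\le n,\,r\mid n-k}(\lambda_k s_k(f,t)-\lambda_{k-r}s_{k-r}(f,t))$, apply the triangle inequality under the sum, take $\|\cdot\|_p$ by monotonicity, and then the supremum over $n$. For $\|f\|_{S^p(\Lambda^r)}\le r\|f\|_{S^p(\Lambda)}$ I would use the pointwise $r$-fold telescoping $\lambda_k s_k-\lambda_{k-r}s_{k-r}=\sum_{i=0}^{r-1}(\lambda_{k-i}s_{k-i}-\lambda_{k-i-1}s_{k-i-1})$, which yields the termwise bound $\sum_{k=0}^n|\lambda_k s_k(f,t)-\lambda_{k-r}s_{k-r}(f,t)|\le r\sum_{k=0}^n|\lambda_k s_k(f,t)-\lambda_{k-1}s_{k-1}(f,t)|$ already underlying Proposition~\ref{prop1}; dividing by $\lambda_n$, taking $\|\cdot\|_p$ and the supremum finishes this step. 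For the last inequality I would split $\lambda_k s_k-\lambda_{k-1}s_{k-1}=\lambda_k(s_k-s_{k-1})+(\lambda_k-\lambda_{k-1})s_{k-1}$ and apply Minkowski. In the first resulting sum I bound $\lambda_k\le\lambda_n$ and use $\|s_k(f)-s_{k-1}(f)\|_p=\|a_k\cos kt+b_k\sin kt\|_p\le|a_k|+|b_k|$; in the second I use $\sum_{k=0}^n(\lambda_k-\lambda_{k-1})=\lambda_n$ together with the pointwise bound $|s_{k-1}(f,t)|\le\|f\|_A$ (trivial when $f\notin A$), so each sum is at most $\|f\|_A$ and the total is $2\|f\|_A$.

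The normalization of $\|\cdot\|_p$ so that $\|1\|_p=1$ is what keeps the constants clean: it gives $\|g\|_p\le\|g\|_\infty$ for bounded $g$ and $\|\cos kt\|_p,\|\sin kt\|_p\le1$, precisely at the places where the $C$-proof invoked $\max_t$. The inclusions $A\subset S^p(\Lambda)\subset S^p(\Lambda^r)\subset U^p$ are then obtained not from the norm bounds directly but by applying the same termwise estimates to the centered defect partial sums $s_k(f)-f$ in place of $s_k(f)$ and letting $n\to\infty$: the $\Lambda$- or $\Lambda^r$-strong $L^p$-defect on the left is dominated by a constant multiple of the defect on the right, which tends to $0$ by hypothesis. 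I do not expect a genuine obstacle; the only point demanding care is checking that no step of the $C$-metric argument secretly used a property of $\max_t$ beyond subadditivity and monotonicity, together with the bookkeeping of the normalizing constant in the final estimate.
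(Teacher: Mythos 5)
Your proof is correct and is essentially the argument the paper intends: the paper states Proposition~\ref{propLp1} without proof as the $L^p$ analogue of Propositions~\ref{prop1} and~\ref{propC1}, whose justification rests on exactly the two termwise estimates you use (the telescoping identity over $k\equiv n \pmod r$ and the $r$-fold telescoping inequality), plus the standard splitting $\lambda_k s_k-\lambda_{k-1}s_{k-1}=\lambda_k(s_k-s_{k-1})+(\lambda_k-\lambda_{k-1})s_{k-1}$ from M\'oricz for the last bound. Your added care in deriving the set inclusions from the centered quantities $s_k(f)-f$ rather than from the norm inequalities alone is a point the paper glosses over but is handled correctly here.
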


The next results are analogous to Lemmas \ref{lemmaC1} and \ref{lemmaC2} and Theorems \ref{thmC1} and \ref{thmC2}, respectively.

\begin{lemma}\label{lemmaLp1}
A function $f$ belongs to $S^p(\Lambda^r)$ if and only if
\begin{itemize}[noitemsep,topsep=0pt]
\item[{\rm (v)}] $\displaystyle \lim_{k\to \infty} \|s_k (f) - f \|_p = 0$, and
\item[{\rm (vi)}] $\displaystyle \lim_{n\to \infty} \left\|\frac{1}{\lambda_n} \sum_{k=r}^{n} \lambda_{k-r} |s_k (f) - s_{k-r} (f)| \right\|_p = 0.$
\end{itemize}
\end{lemma}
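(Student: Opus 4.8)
The plan is to transcribe the proof of the numerical characterization in Lemma~\ref{lemmak1} (equivalently its $C$-metric twin Lemma~\ref{lemmaC1}) into the $L^p$ setting, replacing the modulus $|\cdot|$ by the norm $\|\cdot\|_p$ throughout and invoking Minkowski's inequality wherever the numerical argument used the triangle inequality for complex numbers. Write $g_k := s_k(f) - f$, so that $g_k - g_{k-r} = s_k(f) - s_{k-r}(f)$ and, by definition, $f\in S^p(\Lambda^r)$ means precisely $\bigl\|\lambda_n^{-1}\sum_{k=0}^n |\lambda_k g_k - \lambda_{k-r} g_{k-r}|\bigr\|_p \to 0$. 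First I would record the pointwise splitting $\lambda_k g_k - \lambda_{k-r}g_{k-r} = (\lambda_k - \lambda_{k-r}) g_k + \lambda_{k-r}(g_k - g_{k-r})$ and its rearranged (reverse-triangle) form, together with the elementary identity $\sum_{k=0}^n (\lambda_k - \lambda_{k-r}) = \lambda_n + \lambda_{n-1} + \cdots + \lambda_{n-r+1} \le r\lambda_n$, verified by telescoping within each residue class modulo $r$ (recall $\lambda_{k-r}=0$ for $k<r$, which also makes $\sum_{k=0}^n$ and $\sum_{k=r}^n$ agree in (vi)).

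For the implication $f\in S^p(\Lambda^r)\Rightarrow$ (v),(vi), condition (v) is immediate from the inclusion $S^p(\Lambda^r)\subset U^p$ of Proposition~\ref{propLp1}; alternatively it follows from the telescoping identity $g_n = \lambda_n^{-1}\sum_{r\mid n-k,\,0\le k\le n}(\lambda_k g_k - \lambda_{k-r}g_{k-r})$, which gives pointwise $|g_n|\le \lambda_n^{-1}\sum_{k=0}^n|\lambda_k g_k - \lambda_{k-r}g_{k-r}|$ and hence $\|s_n(f)-f\|_p\le \bigl\|\lambda_n^{-1}\sum_{k=0}^n|\lambda_k g_k - \lambda_{k-r}g_{k-r}|\bigr\|_p\to 0$. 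For (vi) I would use the reverse splitting $\lambda_{k-r}|g_k - g_{k-r}| \le |\lambda_k g_k - \lambda_{k-r}g_{k-r}| + (\lambda_k-\lambda_{k-r})|g_k|$, sum over $k$, divide by $\lambda_n$, and apply Minkowski: the first resulting term tends to $0$ by hypothesis, and the second is dominated by $\lambda_n^{-1}\sum_{k=0}^n(\lambda_k-\lambda_{k-r})\|g_k\|_p$, which tends to $0$ by the already-established (v).

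For the converse (v),(vi)$\Rightarrow f\in S^p(\Lambda^r)$, I would apply the forward splitting, sum, divide by $\lambda_n$, and take $\|\cdot\|_p$; Minkowski then bounds $\bigl\|\lambda_n^{-1}\sum_{k=0}^n|\lambda_k g_k - \lambda_{k-r}g_{k-r}|\bigr\|_p$ by $\lambda_n^{-1}\sum_{k=0}^n(\lambda_k-\lambda_{k-r})\|g_k\|_p$ plus the quantity in (vi). The latter vanishes by (vi) and the former by (v), giving $f\in S^p(\Lambda^r)$.

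In both directions the only step beyond bookkeeping is the claim that $\lambda_n^{-1}\sum_{k=0}^n(\lambda_k-\lambda_{k-r})\|g_k\|_p \to 0$ whenever $\|g_k\|_p\to 0$. This is a standard regularity (Toeplitz-type) argument: the nonnegative weights $w_{n,k}=(\lambda_k-\lambda_{k-r})/\lambda_n$ have total mass $\le r$ for every $n$ and satisfy $w_{n,k}\to 0$ as $n\to\infty$ for each fixed $k$, so splitting the sum at a large index $K$ beyond which $\|g_k\|_p<\varepsilon$ yields $\limsup_n\le r\varepsilon$. I expect this regularity step to be the main---indeed essentially the only---substantive point, the rest being a verbatim $L^p$ transcription of the numerical proof; the one thing to double-check is that Minkowski's inequality is always invoked in the correct (triangle) direction, i.e.\ that the genuine norm $\|\cdot\|_p$ permits pulling the finite sums and nonnegative coefficients outside exactly as the modulus did in Lemma~\ref{lemmak1}.
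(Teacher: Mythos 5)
Your proposal is correct and is precisely the argument the paper has in mind: the paper omits this proof as ``analogous'' to Lemmas~\ref{lemmak1} and~\ref{lemmaC1}, and your transcription --- the splitting $\lambda_k g_k-\lambda_{k-r}g_{k-r}=(\lambda_k-\lambda_{k-r})g_k+\lambda_{k-r}(g_k-g_{k-r})$, the bound $\sum_{k=0}^n(\lambda_k-\lambda_{k-r})\le r\lambda_n$, Minkowski's inequality in place of the scalar triangle inequality, and the Toeplitz-type regularity step --- is exactly that analogous proof carried out in the $L^p$-metric.
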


\begin{lemma}\label{lemmaLp2}
A function $f$ belongs to $S^p(\Lambda^r)$ if and only if
\begin{itemize}[noitemsep,topsep=0pt]
\item[{\rm (v')}] $\displaystyle \lim_{n\to \infty} \|\sigma_n (f) - f \|_p = 0$
\end{itemize}
and condition {\rm (vi)} is satisfied.
\end{lemma}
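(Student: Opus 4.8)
The plan is to reduce everything to Lemma \ref{lemmaLp1}, which already characterizes membership in $S^p(\Lambda^r)$ by the pair of conditions (v) and (vi). Since condition (vi) appears in both Lemma \ref{lemmaLp1} and the present statement, it suffices to prove that, \emph{under the standing assumption that} (vi) \emph{holds}, condition (v) (ordinary $L^p$-convergence $s_k(f)\to f$) is equivalent to condition (v') (ordinary $L^p$-convergence $\sigma_n(f)\to f$). The cleanest route to this equivalence is to show that the two sequences $\{s_n(f)\}$ and $\{\sigma_n(f)\}$ are asymptotically equal in $L^p$, that is $\|s_n(f)-\sigma_n(f)\|_p\to 0$, whenever (vi) is satisfied; then convergence of either sequence forces convergence of the other to the same limit.

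First I would record the telescoping identity already used in Section \ref{sec2}, namely
$$s_n(f)=\frac{1}{\lambda_n}\sum_{\substack{0\le k\le n\\ r\mid n-k}}\bigl(\lambda_k s_k(f)-\lambda_{k-r}s_{k-r}(f)\bigr),$$
which holds because the sum over the residue class $k\equiv n\pmod r$ telescopes and $\lambda_{-r}s_{-r}(f)=0$. Subtracting the definition of $\sigma_n(f)$ and simplifying the bracketed term via $(\lambda_k s_k-\lambda_{k-r}s_{k-r})-(\lambda_k-\lambda_{k-r})s_k=\lambda_{k-r}(s_k-s_{k-r})$ yields the exact formula
$$s_n(f)-\sigma_n(f)=\frac{1}{\lambda_n}\sum_{\substack{0\le k\le n\\ r\mid n-k}}\lambda_{k-r}\bigl(s_k(f)-s_{k-r}(f)\bigr).$$

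Next I would bound this pointwise: the triangle inequality for the modulus together with the fact that passing from the single residue class $k\equiv n\pmod r$ to the full range $r\le k\le n$ only adds non-negative terms gives
$$\bigl|s_n(f)-\sigma_n(f)\bigr|\le \frac{1}{\lambda_n}\sum_{\substack{0\le k\le n\\ r\mid n-k}}\lambda_{k-r}\bigl|s_k(f)-s_{k-r}(f)\bigr|\le \frac{1}{\lambda_n}\sum_{k=r}^{n}\lambda_{k-r}\bigl|s_k(f)-s_{k-r}(f)\bigr|$$
almost everywhere. Taking $L^p$-norms and using monotonicity of $\|\cdot\|_p$ on non-negative functions, the right-hand side is precisely the quantity appearing in (vi); hence (vi) yields $\|s_n(f)-\sigma_n(f)\|_p\to 0$. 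With this in hand, the triangle inequality in $L^p$ shows that under (vi) each of (v) and (v') implies the other, so both sides of Lemma \ref{lemmaLp2} match the conditions of Lemma \ref{lemmaLp1}, and the equivalence follows.

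I do not expect a genuine obstacle here; the whole content sits in the telescoping identity and the passage from the residue-class sum to the full sum of (vi). The one point requiring care is that the modulus in (vi) is placed \emph{inside} the sum and the norm, so I must keep the estimate pointwise throughout and take $L^p$-norms only at the very last step; performing the bound in the wrong order (norming term by term) would destroy the uniformity that makes (vi) directly applicable.
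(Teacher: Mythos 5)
Your argument is correct and is exactly the (omitted) argument the paper intends: Lemma \ref{lemmaLp2} is stated as the $L^p$-analogue of Lemma \ref{lemmak2}, whose proof rests on the same telescoping identity $s_n(f)-\sigma_n(f)=\frac{1}{\lambda_n}\sum_{r\mid n-k}\lambda_{k-r}(s_k(f)-s_{k-r}(f))$ and the enlargement of the residue-class sum to the full sum in (vi). Your cautionary remark about keeping the estimate pointwise and taking the $L^p$-norm only at the end is exactly the right point of care.
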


\begin{theorem}\label{thmLp1}
The set $S^p(\Lambda^r)$ endowed with the norm $\|.\|_{S^p(\Lambda^r)}$ is a Banach space.
\end{theorem}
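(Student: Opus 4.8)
The plan is to run the completeness argument behind Theorem \ref{thm1} (modeled on \cite[Theorem 1]{Mor}), replacing the sup-norm by the $L^p$-norm $\|\cdot\|_p$ throughout. Let $\{f_j\}$ be Cauchy in $(S^p(\Lambda^r),\|\cdot\|_{S^p(\Lambda^r)})$. First I would produce a candidate limit: by Proposition \ref{propLp1} we have $\|f_j-f_\ell\|_{U^p}\le\|f_j-f_\ell\|_{S^p(\Lambda^r)}$, and since every $g\in U^p$ satisfies the standard inequality $\|g\|_p\le\|g\|_{U^p}$, the sequence is Cauchy in $L^p$ and converges to some $f\in L^p$. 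Because each Fourier coefficient is a continuous linear functional on $L^1\supseteq L^p$, we get $a_m(f_j)\to a_m(f)$ and $b_m(f_j)\to b_m(f)$, hence $s_k(f_j)\to s_k(f)$ in $L^p$ for every fixed $k$. This is the substitute for coefficientwise convergence in the numerical case, and it sidesteps the unboundedness of the partial-sum operators when $p=1$.

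Next I would establish norm convergence $f_j\to f$. Given $\varepsilon>0$, choose $N$ with $\|f_j-f_\ell\|_{S^p(\Lambda^r)}<\varepsilon$ for $j,\ell\ge N$, i.e.
\[
\left\|\frac{1}{\lambda_n}\sum_{k=0}^n |\lambda_k(s_k(f_j)-s_k(f_\ell))-\lambda_{k-r}(s_{k-r}(f_j)-s_{k-r}(f_\ell))|\right\|_p<\varepsilon
\]
for all $n$. Fixing $j\ge N$ and $n$ and letting $\ell\to\infty$, the inner expression is a finite combination of the $s_k(f_\ell)$ and thus converges in $L^p$ to the same expression with $f$ in place of $f_\ell$ (continuity of $u\mapsto|u|$ and of the $L^p$-norm); taking the supremum over $n$ yields $\|f_j-f\|_{S^p(\Lambda^r)}\le\varepsilon$ for $j\ge N$, and in particular $\|f\|_{S^p(\Lambda^r)}<\infty$.

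The step I expect to be the real obstacle is to show that this norm limit $f$ actually lies in $S^p(\Lambda^r)$, i.e. that it satisfies the defining $o(1)$ condition and not merely the finite-supremum bound. Here I would compare the centered quantities of $f$ and $f_j$: writing $(s_k(f)-f)-(s_k(f_j)-f_j)=(s_k(f)-s_k(f_j))-(f-f_j)$ and telescoping gives
\[
\lambda_k\big[(s_k(f)-f)-(s_k(f_j)-f_j)\big]-\lambda_{k-r}\big[\cdots\big]=\big[\lambda_k(s_k(f)-s_k(f_j))-\lambda_{k-r}(s_{k-r}(f)-s_{k-r}(f_j))\big]-(f-f_j)(\lambda_k-\lambda_{k-r}),
\]
so that the centered $S^p(\Lambda^r)$-distance between $f$ and $f_j$ is at most $\|f-f_j\|_{S^p(\Lambda^r)}+\|f-f_j\|_p\cdot\sup_n\frac{1}{\lambda_n}\sum_{k=0}^n(\lambda_k-\lambda_{k-r})$. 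The telescoping sum equals $\lambda_n+\cdots+\lambda_{n-r+1}\le r\lambda_n$, so the extra factor is bounded by $r$; since $\|f-f_j\|_p\le\|f-f_j\|_{S^p(\Lambda^r)}\to0$, this whole distance is $\le(1+r)\varepsilon$ uniformly in $n$. As $f_j\in S^p(\Lambda^r)$ forces its own centered quantity to tend to $0$ as $n\to\infty$, I conclude $\limsup_n\|\frac{1}{\lambda_n}\sum_{k=0}^n|\lambda_k(s_k(f)-f)-\lambda_{k-r}(s_{k-r}(f)-f)|\|_p\le(1+r)\varepsilon$, and letting $\varepsilon\to0$ gives $f\in S^p(\Lambda^r)$; alternatively one may verify conditions (v)--(vi) of Lemma \ref{lemmaLp1} for $f$ by the same uniform comparison. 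This establishes completeness, exactly paralleling Theorems \ref{thm1} and \ref{thmC1}.
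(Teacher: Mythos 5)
Your argument is correct and is essentially the proof the paper intends: the paper omits the details of Theorem \ref{thmLp1} as "analogous" to Theorems \ref{thm1} and \ref{thmC1}, i.e.\ the M\'oricz completeness argument with $\|\cdot\|_p$ in place of the sup-norm, which is exactly what you carry out (candidate limit via $\|g\|_p\le\|g\|_{U^p}\le\|g\|_{S^p(\Lambda^r)}$, passage to the limit in $\ell$ for fixed $n$, and the centered three-term comparison with the telescoping bound $\frac{1}{\lambda_n}\sum_{k=0}^n(\lambda_k-\lambda_{k-r})\le r$ to show $f\in S^p(\Lambda^r)$). No gaps.
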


\begin{theorem}\label{thmLp2}
If $f \in S^p(\Lambda^r)$, then
\begin{align*}\label{eqLp}
\lim_{m\to \infty} \|s_m (f) - f \|_{S^p(\Lambda^r)} = 0.
\end{align*}
\end{theorem}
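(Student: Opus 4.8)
The plan is to carry out the exact $L^{p}$ transcription of the proof of Theorem \ref{thmC2}, replacing the $C$-norm by $\|\cdot\|_{p}$ at every step. First I would note that the sequence of partial sums of the Fourier series of $f - s_{m}(f)$ vanishes up to index $m$ and then coincides with $s_{m+b}(f) - s_{m}(f)$ for the indices $m+1, m+2, \ldots$; writing a generic index beyond $m$ as $m+ar+b$ with $0 \le a$ and $1 \le b \le r$, the quantity $\|s_{m}(f) - f\|_{S^{p}(\Lambda^{r})}$ unfolds by definition into the supremum over $n \ge 1$ of $\lambda_{m+n}^{-1}$ times the $L^{p}$-norm of a boundary sum (the $a=0$ terms $\lambda_{m+b}|s_{m+b}(f)-s_{m}(f)|$) together with the double sum over $a \ge 1$ and the admissible residues $b$.

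Next, inside each summand of the double sum I would add and subtract $\lambda_{m+ar-r+b}\bigl(s_{m+ar+b}(f) - s_{m}(f)\bigr)$ and apply the triangle inequality, so that the summand splits into a telescoping-coefficient part $(\lambda_{m+ar+b} - \lambda_{m+ar-r+b})\,|s_{m+ar+b}(f) - s_{m}(f)|$ and a consecutive-difference part $\lambda_{m+ar-r+b}\,|s_{m+ar+b}(f) - s_{m+ar-r+b}(f)|$. This is the same decomposition as in Theorem \ref{thmC2}. For fixed $b$ the coefficients telescope to at most $\lambda_{m+n}$, and there are $r$ residues, so after pulling the $L^{p}$-norm through the finite sums one arrives at
\[
\|s_{m}(f) - f\|_{S^{p}(\Lambda^{r})} \le r \sup_{j,k > m-r} \|s_{j}(f) - s_{k}(f)\|_{p} + \sup_{n \ge m+1} \Bigl\| \frac{1}{\lambda_{n}} \sum_{k=m+1}^{n} \lambda_{k-r} |s_{k}(f) - s_{k-r}(f)| \Bigr\|_{p}.
\]

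Finally I would let $m \to \infty$. For the first term, condition (v) of Lemma \ref{lemmaLp1} gives $\|s_{k}(f) - f\|_{p} \to 0$, whence $\sup_{j,k > m-r} \|s_{j}(f) - s_{k}(f)\|_{p} \le 2 \sup_{k > m-r} \|s_{k}(f) - f\|_{p} \to 0$; for the second term, condition (vi) of Lemma \ref{lemmaLp1} asserts precisely that the tail supremum vanishes as $m \to \infty$. Together these give $\lim_{m\to\infty}\|s_{m}(f) - f\|_{S^{p}(\Lambda^{r})} = 0$. The one place where the $C$-argument does not transcribe letter for letter is the passage of the norm through the finite sum: there the sub-additivity of $\sup_{t}$ is used, and here it must be replaced by Minkowski's inequality (the triangle inequality for $\|\cdot\|_{p}$). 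Since Minkowski's inequality holds for every $1 \le p < \infty$, this is the only---and entirely routine---obstacle, and the proof goes through uniformly in $p$.
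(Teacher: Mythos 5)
Your proposal is correct and is exactly the argument the paper intends: Theorem \ref{thmLp2} is stated as the $L^{p}$ analogue of Theorem \ref{thmC2}, and you reproduce the same decomposition (telescoping-coefficient part plus consecutive-difference part), the same final bound, and the same appeal to conditions (v) and (vi) of Lemma \ref{lemmaLp1}. Your remark that the only non-letter-for-letter step is replacing the pointwise/sup-norm estimate by Minkowski's inequality for $\|\cdot\|_{p}$ is precisely the right observation and closes the one genuine gap in the ``analogous proof'' claim.
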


Finally, we obtain the $L^p$-metric version of Theorem \ref{thmDL3}.

\begin{theorem}\label{thmDLp2}
If the sums in \eqref{dl3} converge $\Lambda^{2}$-strongly in the $L^p$-metric restricted to a set of positive measure, then \eqref{dl4} holds true.

Consequently, if $f,g\in L^p$, $1<p<\infty$, has single Fourier series $\displaystyle \frac{1}{2} a_0 + \sum_{k=1}^{\infty} a_k \cos{kt}$ and $\displaystyle \sum_{k=1}^{\infty} a_k \sin{kt}$, respectively, then the partial sums of both series converge $\Lambda^{2}$-strongly to $f(t)$ and $g(t)$ in the $L^p$-metric if and only if coefficients $a_k$ satisfy \eqref{dl4}.
\end{theorem}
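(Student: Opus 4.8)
The plan is to mirror the $C$-metric proof of Theorem \ref{thmDL3}, replacing the pointwise supremum estimates by integral ones, and to read the converse off from Lemma \ref{lemmaLp1} together with the M. Riesz theorem on $L^p$-convergence of Fourier series. Write $\|\cdot\|_{L^p(A)}$ for the norm restricted to a set $A$ of positive measure. The forward direction of the $L^p(A)$-analogue of Lemma \ref{lemmaLp1} (which holds verbatim with $\|\cdot\|_{L^p(A)}$ in place of $\|\cdot\|_p$) turns the hypothesis into
$$\lim_{n\to\infty}\Big\|\frac{1}{\lambda_n}\sum_{k=2}^{n}\lambda_{k-2}\,|s^i_k - s^i_{k-2}|\Big\|_{L^p(A)} = 0 \qquad (i=1,2),$$
where $s^1_k - s^1_{k-2} = a_{k-1}\cos(k-1)t + a_k\cos kt$ and similarly for $s^2$. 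Exactly as in the proof of Theorem \ref{thmDL2} I would pass to the polar form $\rho_k(t)\cos\theta_k$ and $\rho_k(t)\sin\theta_k$, with $\theta_k=(k-1)t+f_k(t)$ and $\rho_k(t)=\sqrt{a_{k-1}^2+a_k^2+2a_{k-1}a_k\cos t}$, and fix the same constant $C$ and set $E\subseteq A$ of positive measure on which \eqref{eqb} gives $\rho_k(t)\ge C(|a_{k-1}|+|a_k|)$.

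The decisive $L^p$ step is to use both series at once. Setting $\beta_k(t):=|s^1_k - s^1_{k-2}|+|s^2_k - s^2_{k-2}| = \rho_k(t)\,(|\cos\theta_k|+|\sin\theta_k|)\ge\rho_k(t)$, we have $\beta_k(t)\ge C(|a_{k-1}|+|a_k|)$ on $E$. Weighting by $\lambda_{k-2}/\lambda_n$, summing, integrating over $E$, and applying H\"older's inequality (recall $E\subseteq A$ and $1/p+1/q=1$) yields
$$C\,|E|\,P_n \;\le\; \int_E \frac{1}{\lambda_n}\sum_{k=2}^{n}\lambda_{k-2}\,\beta_k(t)\,dt \;\le\; |E|^{1/q}\sum_{i=1}^{2}\Big\|\frac{1}{\lambda_n}\sum_{k=2}^{n}\lambda_{k-2}\,|s^i_k - s^i_{k-2}|\Big\|_{L^p(A)},$$
where $P_n:=\frac{1}{\lambda_n}\sum_{k=2}^{n}\lambda_{k-2}(|a_{k-1}|+|a_k|)$; hence $P_n\to 0$. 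Reindexing the $|a_{k-1}|$-contribution gives $\frac{1}{\lambda_n}\sum_{j=1}^{n-1}\lambda_{j-1}|a_j|\to 0$, so only the single boundary term $\lambda_{n-1}|a_n|/\lambda_n\le|a_n|$ separates this from \eqref{dl4}.

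To remove that term, note that the restricted condition (v) of Lemma \ref{lemmaLp1} forces the partial sums to converge in $L^p(A)$, whence $\|s^1_n - s^1_{n-1}\|_{L^p(A)}=|a_n|\,\|\cos n\cdot\|_{L^p(A)}\to 0$, while $\int_A|\cos nt|^p\,dt\to\frac{|A|}{2\pi}\int_0^{2\pi}|\cos t|^p\,dt>0$ by a Riemann--Lebesgue computation; thus $|a_n|\to 0$ and \eqref{dl4} follows. The ``consequently'' statement is then an equivalence. The direct implication is the case $A=[0,2\pi)$ of the above (the constant term of $f$ cancels in every difference $s_k - s_{k-2}$). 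For the converse, given \eqref{dl4}, Lemma \ref{lemmaLp1} applies: condition (v) holds since $\|s_k(f)-f\|_p\to 0$ by the M. Riesz theorem (this is where $1<p<\infty$ is used, see \cite{Zyg}), and condition (vi) holds because $\rho_k(t)\le|a_{k-1}|+|a_k|$ gives $\|\frac{1}{\lambda_n}\sum_{k=2}^{n}\lambda_{k-2}|s_k - s_{k-2}|\|_p\le(2\pi)^{1/p}P_n$, which tends to $0$ by \eqref{dl4} (as $\lambda_{k-2}\le\lambda_{k-1}$); the sine series for $g$ is handled identically.

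The main obstacle is exactly the $L^p$ adaptation of the Denjoy--Luzin estimate: in contrast to the $C$-metric case, $|a_{k-1}|+|a_k|$ cannot be dominated pointwise by a single difference, since the factor $|\cos\theta_k|$ may vanish, so the cosine and sine series must be exploited \emph{together} to obtain $\beta_k\ge\rho_k$, after which H\"older's inequality transfers the $L^p(A)$-smallness to the weighted averages. The secondary point requiring care is the shift of weights from $\lambda_{k-2}$ to $\lambda_{k-1}$, which makes the separate verification $|a_n|\to 0$ unavoidable; the passage $1<p<\infty$ enters only through M. Riesz's theorem in the converse.
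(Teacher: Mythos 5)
Your proposal is correct and follows essentially the same route as the paper: the first statement and the necessity part are the $L^p$ adaptation of the Denjoy--Luzin argument of Theorems \ref{thmDL2} and \ref{thmDL3}, and the sufficiency part rests on M.~Riesz's theorem for condition (v) together with the trivial bound $|s_k-s_{k-2}|\le |a_{k-1}|+|a_k|$ for condition (vi). You spell out two details the paper leaves implicit --- the H\"older step transferring $L^p(A)$-smallness to the weighted coefficient averages, and the boundary term $\lambda_{n-1}|a_n|/\lambda_n$ created by the index shift --- and both are handled correctly.
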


\begin{proof}
The first statement and the necessity part of the second statement is obtained in the same way as in the proof of Theorem \ref{thmDL2}. 

The sufficiency part of the second statement follows from two facts. First, by the theorem of Riesz \cite{Zyg}, (v) in Lemma \ref{lemmaLp1} holds. Second, (vi) is also satisfied since
$$
\lim_{n\to \infty} \left\|\frac{1}{\lambda_n} \sum_{k=2}^{n} \lambda_{k-2} |s_k (f) - s_{k-2} (f)| \right\|_p \leq 2 \lim_{n\to \infty} \frac{1}{\lambda_n} \sum_{k=1}^{n} \lambda_{k-1} |a_k| = 0.\eqno\qedhere
$$
\end{proof}

\begin{prob}
Can we prove similar statements to the above proved theorems about the $\Lambda^r$-strong convergence in the case $r>2$ as well? 
\end{prob}

\end{document}